\newtheorem{theorem}{Theorem}[section]
\newtheorem{lemma}[theorem]{Lemma}
\numberwithin{equation}{section}
\begin{document}

\title[electromagnetic scattering by rectangular cavities]{A highly efficient and accurate numerical method for the electromagnetic scattering problem with rectangular cavities}

\author{Peijun Li}
\address{Department of Mathematics, Purdue University, West Lafayette, Indiana
47907, USA.}
\email{lipeijun@math.purdue.edu}

\author{Xiaokai Yuan}
\address{School of Mathematics, Jilin University, Changchun 130012, Jilin, China}
\email{yuanxk@jlu.edu.cn}

\thanks{The first author is supported in part by the NSF grant DMS-2208256. The second author is partially supported by the NSFC grants 12201245 and 12171017.}

\subjclass[2010]{78A25, 78M25, 35Q60, 65N80}

\keywords{Electromagnetic scattering, the Helmholtz equation, cavity scattering problem, transparent boundary condition, hypersingular and weakly singular integrals, numerical quadratures.}

\begin{abstract}
This paper presents a robust numerical solution to the electromagnetic scattering problem involving multiple multi-layered cavities in both transverse magnetic and electric polarizations. A transparent boundary condition is introduced at the open aperture of the cavity to transform the problem from an unbounded domain into that of bounded cavities. By employing Fourier series expansion of the solution, we reduce the original boundary value problem to a two-point boundary value problem, represented as an ordinary differential equation for the Fourier coefficients. The analytical derivation of the connection formula for the solution enables us to construct a small-scale system that includes solely the Fourier coefficients on the aperture, streamlining the solving process. Furthermore, we propose accurate numerical quadrature formulas designed to efficiently handle the weakly singular integrals that arise in the transparent boundary conditions. To demonstrate the effectiveness and versatility of our proposed method, a series of numerical experiments are conducted.    
\end{abstract}

\maketitle

\section{Introduction}\label{Section:Introduction}

Electromagnetic cavity scattering problems find significant applications in various fields. For instance, in radar and remote sensing, a comprehensive understanding of cavity scattering is crucial for radar systems utilized in target detection, identification, and tracking \cite{BL-SICO-2014}. In wireless communication systems, cavities can arise from surrounding structures or obstacles, and analyzing cavity scattering aids in predicting signal propagation, interference, and overall system performance \cite{JV-IEEE-1991}. Furthermore, cavity scattering plays a significant role in the behavior of metamaterials and photonic devices, influencing their unique electromagnetic properties and guiding applications in areas like superlensing, cloaking, and wave manipulation \cite{GLY-Springer, PQBR-PRL-2006}. Rigorous analysis and accurate computation of cavity scattering are vital for technological progress and effectively addressing real-world challenges.

The crucial industrial and military applications of cavity scattering problems have made them a focal point of interest for both engineering and mathematical communities. In the engineering community, researchers have initiated the investigation of electromagnetic scattering by cavities filled with penetrable materials \cite{Jin-1998, LJ-IEEE-2000}. The well-posedness of cavity scattering problems has been rigorously analyzed using integral equation methods or variational approaches, with detailed studies available in \cite{ABW-MMAS-2000, ABW-JJIA-2001, LW-JCP-2013, VW-IEEE-2003}. For rectangular shaped cavity scattering, a refined stability estimate with an explicit wavenumber dependence has been derived \cite{BY-ARMA-2016, BYZ-SJMA-2012}. Lately, the exploration of subwavelength enhancement has emerged as an important theme in mathematical research \cite{LR-SJAM-2015, LZ-SIAM-2017}. In \cite{BBT-MMS-2010, BT-MMAS-2010, GLY-Springer}, the field enhancement is explored for both single and double rectangle cavities under various boundary conditions. These studies aim to provide a more profound mathematical understanding of the subwavelength enhancement phenomenon. We refer to \cite{L-JCM-2018} for a survey of recent developments in mathematical modeling and analysis of cavity scattering problems.

Numerical methods have been extensively studied for the solution of electromagnetic cavity scattering problems. Since the problem is formulated in an unbounded domain, it is essential to employ an appropriate artificial boundary condition to reformulate it into a bounded domain. Several approaches for introducing artificial boundary conditions include using the Green's function method on the aperture \cite{D-JCP-2013, LMS-SINUM-2013}, employing a perfectly matched layer in the exterior of the cavity \cite{CLX-CCP-2021}, applying the Fourier transform on the ground \cite{ABW-JJIA-2001}, constructing transparent boundary conditions by utilizing the Fourier series expansion on the semi-circle over the cavity \cite{YBL-CSIAM-2020}, or adopting the method of boundary integral equations \cite{LAG-SISC-2014}. In \cite{WDS-NMTMA-2008, WWLS-ANM-2007, BS-SISC-2005}, numerical quadrature formulas were developed to discretize the hypersingular integrals in the method of Green's function. The wave field inside the cavity was approximated using a second-order finite difference scheme. The approach was further extended to a fourth-order scheme in \cite{ZQT-2011-JCM}. In \cite{CLX-CCP-2021} and \cite{YBL-CSIAM-2020}, an adaptive finite element method was developed, combining perfectly matched layer and transparent boundary conditions to handle the possible singularity of the solution, respectively. For problems involving scattering by multiple cavities, the Gauss--Seidel technique or a preconditioned iterative method was employed to accelerate computation \cite{D-JCP-2011, LW-JCP-2013, WZ-CCP-2016, ZZ-JCP-2019}. Notably, in the methods mentioned above, the model equation is discretized within the entire cavity. When the cavity possesses a rectangular shape, the field inside can be approximated using its Fourier series expansion, which reduces the scattering problem to one-dimensional ordinary differential equations for the Fourier coefficients. In \cite{CJLL-JCP-2021}, these ordinary differential equations are discretized using a second-order finite difference scheme. The Fourier coefficients inside the cavity are expressed in terms of the Fourier coefficients on the aperture through Gaussian elimination. Finally, a linear system on the aperture is obtained by applying a finite difference scheme, which provides an effective approach to solve the scattering problem for rectangular cavities. 

This paper presents a highly efficient and accurate numerical method for solving the electromagnetic scattering problem involving multiple multi-layered rectangular cavities. We assume that these cavities are embedded in the ground, with their apertures aligned with the ground, and their interiors filled possibly with multiple layered media. To tackle this problem, we follow the approach presented in \cite{CJLL-JCP-2021}, where the field inside the cavity is expanded using its Fourier series, and the Helmholtz equation is reduced to ordinary differential equations. By studying the equations and the transmission conditions across each layered medium, we establish a connection formula that links the Fourier coefficients of the solution in each layer to the Fourier coefficients of the solution on the aperture. This connection formula, along with the transparent boundary condition on the aperture, leads to a small $N$-by-$N$ linear system, where $N$ represents the Fourier truncation number. To efficiently generate the linear system, we design an alternative transparent boundary condition, which only involves weakly singular integrals. However, due to the singularity of the Hankel function, direct application of high-order quadrature formulas is not feasible. To address this issue, we utilize the power series of Bessel functions to deduce a recursive formula, enhancing the regularity of the integrand function. This enables us to adopt high-order Gaussian quadratures. Once the system is solved, the field inside the cavity can be obtained immediately using the connection formula.

The proposed method offers the advantage of significantly reducing memory and computational costs. As we only need to solve the system on the aperture and store the connection formula of the Fourier coefficients, the required computational resources are dramatically reduced. A series of numerical experiments is conducted to demonstrate the efficiency and versatility of our proposed method. It proves to be efficient and accurate in handling the cavity scattering problem in both transverse magnetic (TM) and transverse electric (TE) polarizations.

The structure of this paper is as follows. Section \ref{Section:Formula} focuses on the model formulation, wherein the two fundamental polarizations are introduced. In Sections \ref{Section:TM} and \ref{Section:TE}, we derive the connection formula for scattering in TM and TE polarization, respectively. This includes scenarios with a single empty cavity, a single multi-layered cavity, and multiple cavities filled with multi-layered media. Section \ref{Section:DtN} is dedicated to deriving an alternative artificial boundary condition and proposing quadrature formulas for the involved weakly singular integrals. In Section \ref{Section:NE}, we provide numerical examples to demonstrate the features of the proposed method. The paper concludes with overall reflections and avenues for future research in Section \ref{Section:C}.

\section{Problem formulation}\label{Section:Formula}

We examine the electromagnetic scattering by rectangular cavities situated within an unbounded ground plane. Given the time dependence of the electromagnetic field as $e^{-{\rm i}\omega t}$, with $\omega>0$ denoting the angular frequency, the wave propagation obeys the time-harmonic Maxwell's equations:
\begin{equation}\label{eh}
 \nabla\times\boldsymbol E={\rm i}\omega\mu\boldsymbol H,\quad
\nabla\times\boldsymbol H=-{\rm i}\omega\epsilon\boldsymbol E
+\sigma\boldsymbol E,
\end{equation}
where $\boldsymbol E$ and $\boldsymbol H$ represent the electric field and the magnetic field, respectively, $\mu$ is the magnetic permeability, $\epsilon$ denotes the electric permittivity, and $\sigma$ stands for the electrical conductivity. We assume that the medium is non-magnetic, implying a constant magnetic permeability $\mu$ throughout. However, the electric permittivity $\epsilon$ and electrical conductivity $\sigma$ are allowed to vary as spatial functions.

In this work, we focus on the electromagnetic scattering problem in TM and TE polarizations.
Let $\Omega\subset\mathbb R^2$ represent the cross-section of the $z$-invariant cavity. Its boundary is denoted by $\partial\Omega=\Gamma_c\cup \Gamma_g$, where $\Gamma_c$ represents the boundary of the cavity, including the vertical walls and the horizontal bottom, while $\Gamma_g$ denotes the infinite ground plane. The aperture of the cavity, aligned with $\Gamma_g$, is denoted by $\Gamma$. The cavity may be filled vertically with a layered inhomogeneous medium. The problem geometry is illustrated in Figure \ref{ex: Geometry}.

\begin{figure}[ht]
\centering
\includegraphics[width=0.65\textwidth]{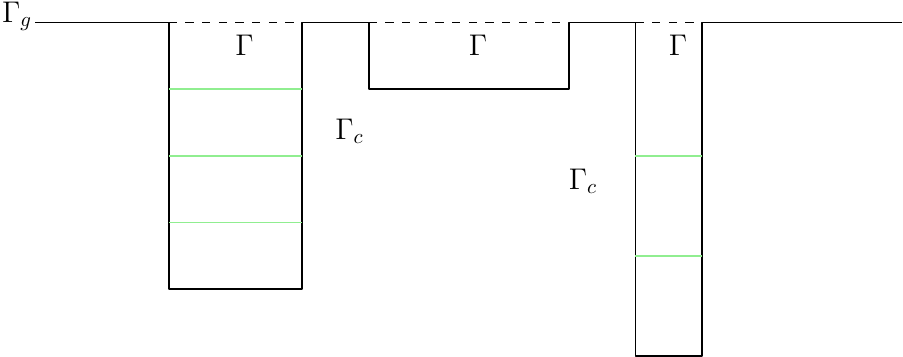}
\caption{Geometry of multiple multi-layered rectangular cavities.}
\label{ex: Geometry}
\end{figure}

 In TM polarization, the incident and total electric fields are perpendicular to the magnetic field and take the form $\boldsymbol E^i=(0, 0, u^i), \boldsymbol E=(0, 0, u)$. It can be verified from \eqref{eh} that $u$ satisfies the two-dimensional Helmholtz equation
\begin{equation}\label{HelmholtzA}
 \Delta u+\kappa^2 u=0\quad\text{in} ~ \mathbb R^2_+\cup\Omega,
\end{equation}
where the wavenumber $\kappa=(\omega^2\epsilon\mu+{\rm i}\omega\mu\sigma)^{1/2}$ with $\Im\kappa\geq 0$. For TE polarization, the magnetic field can be represented as $\boldsymbol H=(0, 0, u)$. Similarly, we can show from \eqref{eh} that $u$ satisfies the two-dimensional generalized Helmholtz equation
\begin{equation}\label{ModifiedHA}
 \nabla\cdot(\kappa^{-2}\nabla u)+u=0\quad\text{in} ~ \mathbb R^2_+\cup\Omega.
\end{equation}

By considering that both the ground plane and the cavity boundary exhibit perfect electric conductivity (PEC), we have 
\begin{equation}\label{pec-pec}
 \nu\times\boldsymbol{E}=0 \quad\text{on}~ \Gamma_g\cup\Gamma_c,
\end{equation}
where $\nu$ denotes the unit normal vector to the surfaces $\Gamma_g$ and $\Gamma_c$. Under TM polarization, the boundary condition \eqref{pec-pec} simplifies to
\begin{equation}\label{pec-pec-TM}
u=0 \quad\text{on}~ \Gamma_g\cup\Gamma_c.
\end{equation}
In TE polarization, the boundary condition \eqref{pec-pec} is equivalent to 
\begin{equation}\label{pec-pec-TE}
\partial_\nu u=0 \quad\text{on}~ \Gamma_g\cup\Gamma_c.
\end{equation}

When the medium in the upper half space is homogeneous and isotropic, it can be characterized by a constant wavenumber denoted as $\kappa_0$. Let the cavity be illuminated from above by a time-harmonic plane wave 
\[
u^i(x, y)=e^{{\rm i}(\alpha x-\beta y)}, \quad \alpha=\kappa_0\sin\theta, 
\quad \beta=\kappa_0\cos\theta,
\]
where $\theta\in (-\pi/2, \pi/2)$ is the incident angle. The scattered field $u^s$ in TM polarization can be described as $u^s=u-u^i+u^r$, and in TE polarization, it is given by $u^s=u-u^i-u^r$, where $u^r=e^{{\rm i}(\alpha x+\beta y)}$ is referred to as the reflection field. In both cases, the scattered field satisfies the Sommerfeld radiation condition:
\begin{equation}\label{Sommerfeld}
	\lim\limits_{r\rightarrow\infty} \sqrt{r}\left(\partial_r u^s-{\rm i}\kappa_0 u^s\right)=0,
	\quad r=\sqrt{x^2+y^2}.
\end{equation}

Based on \eqref{Sommerfeld}, the transparent boundary conditions (TBC) can be formulated on the aperture \cite{ZQT-2011-JCM}.  In TM polarization, the TBC can be expressed as  
\begin{equation}\label{TBC-TM}
	\partial_{y} u=I_{\rm TM}(u)+f,\quad f=-2{\rm i}\beta e^{{\rm i}\alpha x},
\end{equation}
where $I_{\rm TM}$ represents the Dirichlet-to-Neumann (DtN) operator, while the associated hypersingular integral is defined in the sense of the Hadamard finite-part, and its expression is given by 
\begin{equation}\label{Hadamard}
	I_{\rm TM}(u)(x)=\frac{{\rm i}\kappa_0}{2}\int_{\Gamma} \frac{1}{|x-x'|}
	H_1^{(1)}(\kappa_0 |x-x'|)u(x', 0){\rm d}x'. 
\end{equation}
Here, $H_1^{(1)}$ represents the Hankel function of the first kind with order one. In TE polarization, the TBC is described by
\begin{equation}\label{TBC-TE}
	u=I_{\rm TE}(u)+g, \quad g=2e^{{\rm i}\alpha x},
\end{equation}
where the Neumann-to-Dirichlet (NtD) operator $I_{\rm TE}$ involves a weakly singular integral and is defined by
\begin{equation}\label{NtD}
I_{\rm TE}(u)(x)=-\frac{{\rm i}}{2}\int_{\Gamma}  H_0^{(1)}\left(\kappa_0|x-x'|\right)\partial_{y'} u(x', 0){\rm d}x'.
\end{equation}
Here, $H_0^{(1)}$ is the Hankel function of the first kind with order zero. 

It is important to note that the quantities $u$ and $\partial_y u$ appearing in \eqref{TBC-TM} and \eqref{TBC-TE} are to be interpreted as being evaluated from the upper side of $\Gamma$, indicated more precisely as $u(x, 0^+)$ and $\partial_y u(x,0^+)$, respectively.

\section{TM polarization}\label{Section:TM}

In this section, we focus on the boundary value problem \eqref{HelmholtzA}, \eqref{pec-pec-TM}, \eqref{TBC-TM} in TM polarization. To present our findings clearly, we start by considering the scattering phenomena from a single empty cavity. Subsequently, we extend our investigation to the case of a single multi-layered cavity. Finally, we explore the general scenario of multiple cavities, with each cavity being filled possibly with a multi-layered medium.

\subsection{A single empty cavity}\label{Sub1TM}

Assuming that the empty cavity is filled with a homogeneous medium characterized by the wavenumber $\kappa=\kappa_0$. Let $\Gamma=[a, b]\times\{0\}$ denote the aperture of the cavity, where $a$ and $b$ are the coordinates along the $x$-axis, and the width and depth of the cavity are represented by $w=b-a$ and $h$, respectively.

Because of the boundary condition \eqref{pec-pec-TM}, the total field inside the cavity can be approximated using sine functions, i.e., it admits the Fourier series expansion
\begin{equation}\label{fse}
u(x, y)=\sum\limits_{n=1}^N u^{(n)}(y)\sin\frac{n\pi (x-a)}{w},
\end{equation}
where $N$ is a positive integer that controls the accuracy of the numerical solution. Substituting \eqref{fse} into \eqref{HelmholtzA}, we obtain the second order ordinary differential equations for the Fourier coefficients
\begin{eqnarray*}
 u^{(n)''}(y)+\beta_n^2 u^{(n)}(y)=0,\quad y\in(-h, 0), 
\end{eqnarray*}
where $\beta_n=\big(\kappa^2-\left(\frac{n\pi}{w}\right)^2\big)^{1/2}, \Im \beta_n\geq 0$ for $n=1, \dots, N.$

Let $u^{(n)}, n=1, \dots, N$, be the Fourier coefficients of the solution on $\Gamma$. Since $u$ vanishes on the bottom of the cavity, we have from \eqref{fse} that $u^{(n)}(-h)=0, n=1, \dots, N$. Therefore, the $n$-th order Fourier coefficient satisfies a two-point boundary value problem
\begin{equation}\label{singleTM-ODEs}
\left\{
\begin{aligned}
&u^{(n)''}(y)+\beta_n^2 u^{(n)}(y)=0, \quad -h< y< 0,\\
&u^{(n)}(0)=u^{(n)},\quad u^{(n)}(-h)=0.
\end{aligned}
\right.
\end{equation}
A straightforward calculation shows that the solution to \eqref{singleTM-ODEs} is given by 
\begin{equation}\label{SingleTM-un}
u^{(n)}(y)=\left\{
\begin{aligned}
&\frac{1}{1-e^{2{\rm i}\beta_n h}}\big(e^{-{\rm i}\beta_n y}-e^{2{\rm i}\beta_n h}e^{{\rm i}\beta_n y}\big)u^{(n)} &\quad\text{if } ~ \beta_n\neq 0,\\
&\Big(1+\frac{1}{h} y\Big)u^{(n)} &\quad \text{if } ~\beta_n=0. 
\end{aligned}
\right.
\end{equation}

Let $n_0$ denote the term that could lead to $\beta_{n}=0$. Combining \eqref{fse} and \eqref{SingleTM-un} yields that the expression for $u(x, y)$ is given as follows:
\begin{align}\label{singleTM-u}
	u(x, y)&= \sum\limits_{n=1, n\neq n_0}^N \frac{1}{1-e^{2{\rm i}\beta_n h}}\big(e^{-{\rm i}\beta_n y}
	-e^{2{\rm i}\beta_n h}e^{{\rm i}\beta_n y}\big)\sin\frac{n\pi (x-a)}{w}u^{(n)}\notag\\
	&\quad +\Big(1+\frac{1}{h} y\Big)\sin\frac{n_0 \pi (x-a)}{w}u^{(n_0)}.
\end{align}
Taking the normal derivative of \eqref{singleTM-u} from the lower side of $\Gamma$, we obtain 
\begin{align}\label{singleTM-du}
	\partial_y u(x, 0^-)=\sum\limits_{n=1}^N s^{(n)}u^{(n)}\sin\frac{n\pi  (x-a)}{w},
\end{align}
where $s^{(n_0)}=1/h$ and $s^{(n)}=-{\rm i}\beta_n(1+e^{2{\rm i}\beta_n h})/(1-e^{2{\rm i}\beta_n h})$ for $n\neq n_0$. 
Using the transmission conditions on $\Gamma$: $u(x, 0^+)=u(x, 0^-), \partial_y u(x, 0^+)=\partial_y u(x, 0^-),$ and substituting \eqref{singleTM-du} into the TBC \eqref{TBC-TM} leads to  
\begin{eqnarray}\label{singleTM-tbc}
	\sum\limits_{n=1}^N s^{(n)}u^{(n)}\sin\frac{n\pi  (x-a)}{w}
	=\sum\limits_{n=1}^N u^{(n)} I_{\rm TM}\Big(\sin\frac{n\pi (x-a)}{w}\Big)+f.
\end{eqnarray}

Multiplying both sides of \eqref{singleTM-tbc} by $\sin\frac{m \pi (x-a)}{w}$, where $m=1, ...,N$,  and integrating over $\Gamma$, we obtain 
\begin{equation}\label{SingleTM}
	D_{\rm TM}U=M_{\rm TM}U+F,
\end{equation}
where $U=[u^{(1)}, u^{(2)}, \dots, u^{(N)}]^\top$, the $m$-th entry of the vector $F$ is defined by 
\begin{equation*}
	F(m)=-2{\rm i}\beta\int_{0}^{w} e^{{\rm i}\alpha (x+a)}\sin\frac{m\pi  x}{w}{\rm d}x,
\end{equation*}
$D_{\rm TM}$ is a diagonal matrix with its diagonal entries  $D_{\rm TM}(m, m)=w s^{(m)}/2,$
and the elements of the matrix $M_{\rm TM}$, with dimensions $N\times N$, are expressed as 
\begin{equation}\label{TM-SingleDtNOld}
M_{\rm TM}(m, n)=\int_{\Gamma} \sin\frac{m\pi  (x-a)}{w} I_{\rm TM}\Big(\sin\frac{n\pi (x'-a)}{w}\Big){\rm d}x.
\end{equation}

Once the linear system \eqref{SingleTM} is solved, the solution inside the cavity can be explicitly computed using the expansion \eqref{singleTM-u}. However, the matrix entries \eqref{TM-SingleDtNOld} contain hypersingular integrals, which make it challenging to devise a high-order quadrature. In Section \ref{Section:DtN}, we will reformulate \eqref{TBC-TM} to an alternative form that incorporates weakly singular integrals. Additionally, we will introduce high-order quadrature formulas to effectively tackle this concern.

\subsection{A single multi-layered cavity}\label{Sub2TM}

Consider a cavity situated at $[a, b]$ and filled with an $L$-layered inhomogeneous medium, where $L\geq 2$ is an integer. Let $w=b-a$ and $h$ be the width and depth of the cavity, respetively. Denote the sequence of values $-h=y_L<y_{L-1}<\cdots<y_1<y_0=0$. Consider the interfaces between different media represented by $\Gamma_l=[a, b]\times\left\{y_l\right\}$, where $l=1,...,L-1$. The aperture and bottom of the cavity are denoted by $\Gamma_0$ and $\Gamma_L$, respectively. For each $l=1, ..., L$, let $\Omega_l$ be the region between $\Gamma_{l-1}$ and $\Gamma_{l}$. It is assumed that the medium in each layer is homogeneous and characterized by a constant wavenumber $\kappa_l$.

Let $u_l(x, y)$ denote the total field within the $l$-th layer, which satisfies 
\begin{equation}\label{TM-Hui}
\Delta u_l(x, y)+\kappa_l^2 u_l(x, y)=0\quad \text{in } ~ \Omega_l,
\end{equation} 
along with the transmission conditions on the interface $\Gamma_l$:
\begin{equation}\label{TM-tci}
	u_l(x, y_l)=u_{l+1}(x, y_l),\quad \partial_y u_l(x, y_l)=\partial_y u_{l+1}(x, y_l).
\end{equation}
The solution of \eqref{TM-Hui} can be approximated by 
\begin{equation}\label{TM-ui}
u_l(x, y)=\sum\limits_{n=1}^{N}u_l^{(n)}(y)\sin\frac{n\pi (x-a)}{w}.
\end{equation}

Consider $u_l^{(n)}, n=1, \dots, N$, for $l=0, 1, \dots, L$, as the Fourier coefficients of the solution on $\Gamma_l$. 
Note from \eqref{pec-pec-TM} that $u_L^{(n)}=0,  n=1, \dots, N$. For $l=1, \dots, L$, substituting \eqref{TM-ui} into \eqref{TM-Hui} reveals that the Fourier coefficients satisfy 
\begin{equation}\label{MultiTM-ODE}
\left\{
\begin{aligned}
&u_l^{(n)''}(y)+\big(\beta_l^{(n)}\big)^2 u_l^{(n)}(y)=0,\quad y_l< y< y_{l-1},\\
& u_l^{(n)}(y_l)=u_l^{(n)},\quad u_l^{(n)}(y_{l-1})=u_{l-1}^{(n)},
\end{aligned}
\right.
\end{equation}
where $\beta_l^{(n)}=\big(\kappa_l^2-\left(\frac{n\pi}{w}\right)^2\big)^{1/2}$ with $\Im\beta_l^{(n)}\geq 0.$

A straightforward computation shows that the solution of \eqref{MultiTM-ODE} is given by
\begin{equation}\label{MultiTM-u}
u_l^{(n)}(y)=\left\{
\begin{aligned}
&\frac{1}{\zeta_l}\Big[\big(e^{{\rm i}\beta_l^{(n)}\left(y-y_{l-1}\right)}-e^{-{\rm i}\beta_l^{(n)}\left(y-y_{l-1}\right)}\big)u_l^{(n)}\\
&\hspace{1cm}-\big(e^{{\rm i}\beta_l^{(n)}\left(y-y_{l}\right)}-e^{-{\rm i}\beta_l^{(n)}\left(y-y_{l}\right)}\big)u_{l-1}^{(n)}\Big] &\quad\text{if } ~\beta_l^{(n)}\neq 0,\\
&\frac{1}{h_l}\Big[\big(u_l^{(n)}-u_{l-1}^{(n)}\big)y+ u^{(n)}_{l-1}y_l-u^{(n)}_l y_{l-1}\Big] &\quad\text{if }~\beta_l^{(n)}=0,
\end{aligned}
\right.
\end{equation}
where $h_l=y_l-y_{l-1}, \zeta_l=e^{{\rm i}\beta_l^{(n)}h_l}-e^{-{\rm i}\beta_l^{(n)}h_l}$.

Taking the derivatives of \eqref{MultiTM-u} from the upper and lower sides of $\Gamma_l$, and using \eqref{TM-tci}--\eqref{TM-ui}, we deduce a symmetric tri-diagonal linear system for cases where $L\geq 3$, which is referred to as the  connection formula and is presented as 
\begin{equation}\label{MultiTM-V}
D_{\rm TM}^{(n)}\boldsymbol{u}^{(n)}=\boldsymbol{b}_{\rm TM}^{(n)},
\end{equation}
where $\boldsymbol u^{(n)}=[u_1^{(n)}, u_2^{(n)}, \dots, u_{L-1}^{(n)}]^\top$, $\boldsymbol{b}_{\rm TM}^{(n)}=[-a_1^{(n)} u_0^{(n)}, 0,\cdots, 0]^\top$, and the diagonal and sub-diagonal entries of the matrix $D_{\rm TM}^{(n)}$ are given by
\[
D_{\rm TM}^{(n)}(l, l)=b_l^{(n)}+b_{l+1}^{(n)},\quad D_{\rm TM}^{(n)}(l, l+1)=a_{l+1}^{(n)}.
\]
Here
\begin{align}
\label{CoeffTM-Single-a}a_l^{(n)}&=\left\{
\begin{aligned}
&-\frac{1}{h_l}\quad &\text{if } ~\beta_l^{(n)}=0,\\
&-2\frac{{\rm i}\beta_l^{(n)}}{\zeta_l} \quad &\text{if } ~ \beta_l^{(n)}\neq 0,
\end{aligned}
\right.\\
\label{CoeffTM-Single-b}b_l^{(n)}&=\left\{
\begin{aligned}
&\frac{1}{h_l}\quad &\text{if } ~\beta_l^{(n)}=0,\\
&\frac{{\rm i}\beta_l^{(n)}}{\zeta_l}\big(e^{{\rm i}\beta_l^{(n)}h_l}+e^{-{\rm i}\beta_l^{(n)}h_l}\big) \quad  &\text{if }~\beta_l^{(n)}\neq 0.
\end{aligned}
\right.
\end{align}
When $L=2$, the connection formula \eqref{MultiTM-V} is simply reduced to
\begin{equation}\label{MultiTM-V1}
\big(b_1^{(n)}+b_2^{(n)}\big)u_1^{(n)}=-a_1^{(n)} u_0^{(n)}.
\end{equation}

By solving \eqref{MultiTM-V} or \eqref{MultiTM-V1}, we can retrieve the Fourier coefficients of the solution on the interfaces ${\boldsymbol u}^{(n)}$ in terms of the Fourier coefficients of the solution on the aperture $u_0^{(n)}$. Since $u_0^{(n)}$ is unknown, in practice, we can first solve \eqref{MultiTM-V} or \eqref{MultiTM-V1} with the right-hand side $\hat{\boldsymbol b}^{(n)}=[1, 0,.., 0]^\top$, and denote the corresponding solution as $\hat{\boldsymbol u}^{(n)}=[\hat{u}_1^{(n)}, \hat{u}_2^{(n)},\dots, \hat{u}_{L-1}^{(n)}]^\top$. Then, the solution ${\boldsymbol u}^{(n)}$  can be obtained by multiplying the factor $-a_1^{(n)} u_0^{(n)}$ to $\hat{\boldsymbol u}^{(n)}$.

Next, we discuss the approach to determining $u_0^{(n)}, n=1, \dots, N$. For the case when $l=1$, we take the normal derivative of \eqref{TM-ui} on $\Gamma$ from below, leading to  
\begin{equation}\label{MultiTM-du0}
\partial_y u_1(x, 0^-)=\sum\limits_{n=1}^N {\hat s}^{(n)}u_0^{(n)}\sin\frac{n \pi (x-a)}{w},
\end{equation}
where ${\hat s}^{(n)}=-b_1^{(n)}+(a_1^{(n)})^2\hat{u}_1^{(n)}$. We mention that the normal derivative can also be described in the form of \eqref{MultiTM-du0} with ${\hat s}^{(n)}=-b_1^{(n)}$ for the case where $L=1$. 

Utilizing the transmission conditions on $\Gamma$: $u(x, 0^+)=u_1(x, 0^-), \partial_y u(x, 0^+)=\partial_y u_1(x, 0^-)$, and replacing \eqref{singleTM-du} with \eqref{MultiTM-du0} and following the same discussion as that in Subsection \ref{Sub1TM}, we arrive at an analogous formulation of the system depicted in \eqref{SingleTM}: $D_{\rm TM}U=M_{\rm TM}U+F$, except that the diagonal matrix $D_{\rm TM}$ is now defined as $D_{\rm TM}(m, m)=w{\hat s}^{(m)}/2, m=1, \dots, N.$. After the system is solved, the solution in each layer can be computed explicitly by \eqref{TM-ui}.

\subsection{Multiple multi-layered cavities}\label{Sub3TM}

Assume that the ground contains a total of $K$ cavities, and we denote the aperture of the $k$-th cavity as $\Gamma_k=[a_k, b_k]\times\left\{0\right\}$, with a width of $w_k=b_k-a_k$. Inside the $k$-th cavity, it is filled with an inhomogeneous medium consisting of $L_k$ layers. The interfaces between different layers are denoted by $\Gamma_{k, l}=[a_k, b_k]\times\left\{y_l^k\right\}$, where $l=1, .., L_{k}-1$. The aperture and the bottom of the $k$-th cavity are denoted by $\Gamma_{k, 0}$ and $\Gamma_{k, L_k}$, respectively. In the $l$-th layer of the $k$-th cavity, we assume that the medium is homogeneous and characterized by a constant wavenumber $\kappa_{k, l}$.

Denote by $u(x, y; k)$ the total field restricted in the $k$-th cavity. The DtN operator \eqref{Hadamard} can be written as
\[
I_{\rm TM}(u)(x)=\frac{{\rm i}\kappa_0}{2}\sum\limits_{k=1}^K\int_{\Gamma_k} \frac{1}{|x-x'|}H_1^{(1)}(\kappa_0 |x-x'|)u(x', 0; k){\rm d}x'.
\]
The solution in the $l$-th layer of the $k$-th cavity can be approximated by  
\begin{equation}\label{MulMulTM-u}
u_l(x, y; k)=\sum\limits_{n=1}^N u_l^{(n)}(y; k)\sin\frac{n\pi(x-a_k)}{w_k},\quad y\in(y_{l}^k, y_{l-1}^k).
\end{equation}
It can be verified that the Fourier coefficients of \eqref{MulMulTM-u} satisfy the same system as \eqref{MultiTM-ODE}, and the solution is given by
\begin{equation}\label{MultiMulTM-u}
u_l^{(n)}(y; k)=\left\{
\begin{aligned}
&\frac{1}{\zeta^k_l}\Big[\big(e^{{\rm i}\beta_{l,k}^{(n)}(y-y^k_{l-1})}
	-e^{-{\rm i}\beta_{l,k}^{(n)}(y-y^k_{l-1})}\big)u_{l,k}^{(n)}\\
	&\hspace{1cm}
	-\big(e^{{\rm i}\beta_{l, k}^{(n)}(y-y^k_{l})}-e^{-{\rm i}\beta_{l, k}^{(n)}(y-y^k_{l})}\big)u_{l-1,k}^{(n)}\Big]
	&\quad\text{if } ~\beta_{l,k}^{(n)}\neq0,\\
&	\frac{1}{h_l^k}\Big[\big(u_{l, k}^{(n)}-u_{l-1, k}^{(n)}\big)y+y^k_l u^{(n)}_{l-1, k}-y^k_{l-1}u^{(n)}_{l, k}\Big]
	&\quad\text{if } ~\beta_{l,k}^{(n)}=0,
\end{aligned}
\right.
\end{equation}
where $u^{(n)}_{l, k}, n=1, \dots, N, l=0, 1, \dots, L, k=1, \dots, K$ are the Fourier coefficients of the solution on $\Gamma_{k, l}$, $h^k_l=y^k_l-y^k_{l-1}, \zeta^k_l=e^{{\rm i}\beta_{l,k}^{(n)}h^k_l}-e^{-{\rm i}\beta_{l,k}^{(n)}h^k_l}$, and 
$\beta_{l,k}^{(n)}=\big(\kappa_{k, l}^2-\big(\frac{n\pi}{w_k}\big)^2\big)^{1/2}, \Im\beta_{l,k}^{(n)}\geq 0.$

Continuing in a manner analogous to the discourse presented in Subsection \ref{Sub2TM}, we can deduce an equivalent connection formula, similar to the one provided in \eqref{MultiTM-V}, with the inclusion of an additional subscript $k$. For $l=1$, the normal derivative of \eqref{MulMulTM-u} on the aperture of the $k$-th cavity is given by
\begin{equation}\label{MultiMulTM-du0}
\partial_y u_1(x, 0^-; k)= \sum\limits_{n=1}^N s_k^{(n)}u_{0,k}^{(n)}\sin\frac{n \pi (x-a_k)}{w_k},
\end{equation}
where $s_k^{(n)}=-b_{1, k}^{(n)}+(a_{1, k}^{(n)})^2\hat{u}_{1,k}^{(n)}$, with the coefficients $a_{l, k}^{(n)}$ and $b_{l, k}^{(n)}$ defined in \eqref{CoeffTM-Single-a}--\eqref{CoeffTM-Single-b}, with an additional subscript $k$. When $L_k=1$, the coefficient is $s_k^{(n)}=-b_{1, k}^{(n)}$.

Likewise, using the transmission conditions: $u(x,0^+)=u_1(x,0^-;k), \partial_y u(x, 0^+)=\partial_y u_1(x,0^-;k), x\in(a_k, b_k)$, upon substituting \eqref{singleTM-du} with \eqref{MultiMulTM-du0}, and proceeding with a comparable discussion as presented in Subsection \ref{Sub1TM}, we can deduce the linear system governing the Fourier coefficients of the solution on the apertures:
\begin{equation}\label{MultiTM}
{\mathbb D}_{\rm TM}{\mathbb U}={\mathbb M}_{\rm TM}{\mathbb U}+{\mathbb F},
\end{equation}
where 
\[
 {\mathbb U}=\begin{bmatrix}
	U_1 \\ U_2 \\ \vdots \\ U_K
	\end{bmatrix},\quad
 {\mathbb F}=\begin{bmatrix}
	F_1 \\ F_2 \\ \vdots \\ F_K
	\end{bmatrix},\quad 
	{\mathbb D}_{\rm TM}=\begin{bmatrix}
	D^{(1)}_{\rm TM} & 0 & \cdots & 0\\
	0 & D^{(2)}_{\rm TM} & \cdots & 0\\
	 \vdots & \vdots & \ddots & \vdots\\
	0 & 0 & \cdots & D^{(K)}_{\rm TM}
	\end{bmatrix}
\]
with $U_k=[u_{0, k}^{(1)}, u_{0, k}^{(2)}, \dots, u_{0, k}^{(N)}]^\top$, 
$D^{(k)}_{\rm TM}=\frac{w_k}{2}{\rm diag}(s_k^{(1)}, \dots, s_k^{(N)})$, the elements of the $k$-th block of the vector ${\mathbb F}$ being given by 
\[
 F_k(m)=-2{\rm i}\beta\int_{0}^{w_k} e^{{\rm i}\alpha (x+a_k)}\sin\frac{m\pi x}{w_k}{\rm d}x,
\]
and the elements within the block matrix ${\mathbb M}_{\text{TM}}$ consist of hypersingular integrals, which will be transformed into forms involving weakly singular integrals. An examination of this transformation process will be presented in Section \ref{Section:DtN}.

Once the system \eqref{MultiTM} is solved, the Fourier coefficients on the inner interfaces of the $k$-th cavity can be obtained through the connection formula. The total field in the $l$-th layer of the $k$-th cavity can be computed using \eqref{MulMulTM-u}--\eqref{MultiMulTM-u}.

\section{TE polarization}\label{Section:TE}

In this section, we provide a concise overview of the numerical solution for the boundary value problem \eqref{ModifiedHA}, \eqref{pec-pec-TE}, \eqref{TBC-TE} in TE polarization, as its fundamental concept aligns closely with that of the TM polarization.

\subsection{A single empty cavity}\label{Sub1TE}

Consider the scattering by a single empty cavity. Due to the boundary condition \eqref{pec-pec-TE}, the total field can be approximated by the Fourier series of cosine functions:
\begin{equation}\label{TE-fse}
u(x, y)=\sum\limits_{n=0}^N u^{(n)}(y)\cos\frac{n\pi(x-a)}{w}.
\end{equation}

Denote by $u^{(n)}, n=0, 1, \dots, N,$ the Fourier coefficients of the solution on $\Gamma$. Substituting \eqref{TE-fse} into \eqref{ModifiedHA} and \eqref{pec-pec-TE} yields that the Fourier coefficients satisfy 
\[
\left\{
\begin{aligned}
&u^{(n)''}(y)+\beta_n^2 u^{(n)}(y)=0, \quad -h< y< 0,\\
&u^{(n)}(0)=u^{(n)},\quad u^{(n)'}(-h)=0,
\end{aligned}
\right.
\]
which has a unique solution given by  
\begin{equation}\label{SingleTE-un}
u^{(n)}(y)=\left\{
\begin{aligned}
& \frac{1}{1+e^{2{\rm i}\beta_n h}}\big(e^{-{\rm i}\beta_n y}+e^{2{\rm i}\beta_n h}e^{{\rm i}\beta_n y}\big)u^{(n)}
\quad &\text{if } ~\beta_n\neq 0,\\
& u^{(n)}\quad &\text{if } ~ \beta_n=0. 
\end{aligned}
\right.
\end{equation}

Taking the normal derivative of \eqref{TE-fse} on $\Gamma$ from below and using \eqref{SingleTE-un} gives 
\begin{equation}\label{SingleTE-du}
	\partial_y u(x, 0^-)=\sum\limits_{n=0}^N 
	t^{(n)}u^{(n)}\cos\frac{n\pi(x-a)}{w},
\end{equation}
where $t^{(n_0)}=0$ and $t^{(n)}={\rm i}\beta_n(e^{2{\rm i}\beta_n h}-1)/(1+e^{2{\rm i}\beta_n h})$ for $n\neq n_0$. 

As the cavity is assumed to be empty, and the wavenumber within the cavity remains consistent with that in free space, the following transmission conditions are imposed on $\Gamma$: $u(x, 0^+)=u(x, 0^-), \partial_y u(x, 0^+)=\partial_y u(x, 0^-)$. Substituting \eqref{SingleTE-du} into \eqref{TBC-TE} leads to 
\begin{align}\label{SingleTE-identity}
	&\sum\limits_{n=0}^N u^{(n)}(0)\cos\frac{n\pi(x-a)}{w}\\
	&= -\frac{\rm i}{2}\int_{\Gamma} H_0^{(1)}\left(\kappa|x-x'|\right)
	\bigg[\sum\limits_{n=0}^N 
	t^{(n)} u^{(n)}(0)
	\cos\frac{n\pi(x'-a)}{w}\bigg]{\rm d}x'+g(x).\notag
\end{align}
Multiplying both sides of \eqref{SingleTE-identity} with $\cos\frac{m\pi(x-a)}{w}, m=0, 1, \dots, N$, and integrating on the aperture, we obtain    
 \begin{equation}\label{SingleTE-key}
	D_{\rm TE}U=\tilde{M}_{\rm TE}U+G, \quad
	\tilde{M}_{\rm TE}(m, n)=t^{(n)} M_{\rm TE}(m, n),
\end{equation}
where $U=[u^{(0)}, u^{(1)}, \dots, u^{(N)}]^\top$, $D_{\rm TE}$ is a diagonal matrix whose diagonal entry
is $\frac{w}{2}$ unless for $D_{\rm TE}(0, 0)=w$,  and the $m$-th component of vector $G$ is
\begin{equation}\label{SingleTE-g}
G(m) = 2\int_{\Gamma} e^{{\rm i}\alpha x}\cos\frac{m\pi(x-a)}{w}{\rm d}x 
 = 2e^{{\rm i}\alpha a}\int_{0}^w e^{{\rm i}\alpha x}\cos\frac{m\pi x}{w}{\rm d}x, 
\end{equation}
and the elements of the matrix $M_{\rm TE}$, by a change of variables, are expressed by
\begin{equation}\label{SingleTE-Kmn}
M_{\rm TE}(m, n) =
		 -\frac{\rm i}{2}\left(\frac{w}{2\pi}\right)^2\int_{0}^{2\pi}\int_{0}^{2\pi} H_0^{(1)}\Big(\frac{\kappa_0 w}{2\pi}|x-x'|\Big)
		 \cos\frac{n x'}{2}
		\cos\frac{m x}{2}{\rm d}x'{\rm d}x.
\end{equation}

The entries of $M_{\rm TE}$ contain weakly singular integrals, and an efficient quadrature formula for evaluating them is proposed in Section \ref{Section:DtN}. After solving \eqref{SingleTE-key}, the total field in the cavity can be computed explicitly by using \eqref{TE-fse}--\eqref{SingleTE-un}.

\subsection{A single multi-layered cavity}\label{Sub2TE}

Denote by $u_l$ the total field in the $l$-th layer, which can be approximated by 
\begin{equation}\label{TE-ui}
u_l(x, y)=\sum\limits_{n=0}^N u_l^{(n)}(y)\cos\frac{n\pi(x-a)}{w}.
\end{equation}

A straightforward calculation shows that the Fourier coefficients satisfy exactly the same system \eqref{MultiTM-ODE} in the $l$-th layer. Moreover, the transmission conditions across the interface $\Gamma_l$ are 
\begin{equation*}
	u_l(x, y_l)=u_{l+1}(x, y_l),\quad \frac{1}{\kappa_l^2}\partial_y u_l(x, y_l)
	=\frac{1}{\kappa_{l+1}^2} \partial_y u_{l+1}(x, y_l),
	\quad l=1,\dots, L-1, 
\end{equation*}
which give, after using the orthogonality properties of the cosine functions, that
\begin{equation*}
	\frac{1}{\kappa_l^2}\big(a_l^{(n)} u^{(n)}_{l-1}+b_l^{(n)}u^{(n)}_l\big)=-
	\frac{1}{\kappa_{l+1}^2}\big(b_{l+1}^{(n)} u^{(n)}_l+a_{l+1}^{(n)} u^{(n)}_{l+1}\big),\quad l=1,\dots, L-1,
\end{equation*}
where $a^{(n)}_l, b^{(n)}_l$ for $l=1, \dots, L$ are defined in \eqref{CoeffTM-Single-a}--\eqref{CoeffTM-Single-b}. On the bottom of the cavity, we deduce from the boundary condition \eqref{pec-pec-TE} that
\begin{equation}\label{MultiTE-Bottom}
	a_{L}^{(n)} u^{(n)}_{L-1}+b_{L}^{(n)} u^{(n)}_L=0.
\end{equation}

Hence we can obtain a similar tri-diagonal system for the connection formula
\begin{equation}\label{MultiTE-V}
D_{\rm TE}^{(n)}\boldsymbol{u}^{(n)}=\boldsymbol{b}_{\rm TE}^{(n)}, 
\end{equation}
where $\boldsymbol u^{(n)}=[u_1^{(n)}, u_2^{(n)},\dots, u_{L}^{(n)}]^\top$, $\boldsymbol{b}_{\rm TE}^{(n)}=[-\frac{1}{\kappa_1^2}a_1^{(n)} u^{(n)}_0, 0, \dots, 0]^\top$, and the entries of the symmetric tri-diagonal matrix $D_{\rm TE}^{(n)}$ are specified by
\begin{equation}\label{ConnectionTE}
D_{\rm TE}^{(n)}(l, l)=\frac{b_l^{(n)}}{\kappa_l^2}+\frac{b_{l+1}^{(n)} }{\kappa_{l+1}^2},\quad D^{(n)}(l, l+1)=\frac{a_{l+1}^{(n)}}{\kappa_{l+1}^2}
\end{equation}
with $b_{L+1}^{(n)}/\kappa_{L+1}^2=0$. In the case of $L=1$, the connection formula \eqref{ConnectionTE} is reduced to \eqref{MultiTE-Bottom}.

Denote by $\hat{\boldsymbol u}^{(n)}=[\hat{u}_1^{(n)}, \hat{u}_2^{(n)},\dots, \hat{u}_{L}^{(n)}]^\top$
the solution of \eqref{MultiTE-V} with the right-hand side given by
$\hat{\boldsymbol b}^{(n)}=[1, 0,...,0]^\top,$ then we have ${\boldsymbol u}^{(n)}=-\frac{1}{\kappa_1^2}a_1^{(n)} u^{(n)}_0 \hat{\boldsymbol u}^{(n)}.$
Taking the derivative of \eqref{TE-ui} on the aperture from below for $l=1$, and using the transmission conditions $u(x, 0^+)=u_1(x, 0^-)$ and $\frac{1}{\kappa_0^2}\partial_y u(x, 0^+)=\frac{1}{\kappa_1^2}\partial_y u_1(x, 0^-)$, we obtain 
\begin{align}\label{SingleTE-Multi-Du}
	\partial_y u(x, 0^+)=\left(\frac{\kappa_0}{\kappa_1}\right)^2\partial_y u_1(x, 0^-)=\sum\limits_{n=0}^N {\hat t}^{(n)}u_0^{(n)}\cos\frac{\pi n(x-a)}{w}, 
\end{align}
where ${\hat t}^{(n)}=\big(\frac{\kappa_0}{\kappa_1}\big)^2\big[\frac{1}{\kappa_1^2}(a_1^{(n)})^2 \hat{u}^{(n)}_1-b_1^{(n)}\big]$.  Substituting \eqref{SingleTE-Multi-Du} into \eqref{TBC-TE}, multiplying both sides by $\cos\frac{m\pi(x-a)}{w}$, and integrating it on the aperture, we obtain 
\begin{equation}\label{MultiTE-key}
\hat D_{\rm TE} U=\hat M_{\rm TE} U+G,
\end{equation} 
where $U=[u^{(0)}, u^{(1)}, \dots, u^{(N)}]^\top$, the entries of $G$ are given in \eqref{SingleTE-g}, the diagonal matrix $\hat D_{\rm TE}$ and the matrix  $\hat M_{\rm TE}$ are given by 
\[
	\hat D_{\rm TE}(m, m)=\left\{
	\begin{aligned}
	&w\quad &\text{if }~ m=0,\\
	&\frac{w}{2}\quad &\text{if } ~ m\neq 0,
	\end{aligned}
	\right.\qquad
	\hat M_{\rm TE}(m, n)={\hat t}^{(n)} M_{\rm TE}(m, n)
\]
with the elements of $M_{\rm TE}$ being given in \eqref{SingleTE-Kmn}. After the system \eqref{MultiTE-key} is solved, the solution in each layer can be computed explicitly by \eqref{TE-ui}.

\subsection{Multiple multi-layered cavities}\label{Sub3TE}

Let $u(x, y; k)$ denote the total field of the $k$-th cavity. The NtD operator \eqref{TBC-TE} on the $k$-th aperture can be reformulated as 
\begin{eqnarray}\label{MulMul-TE-NtD}
I_{\rm TE}(u)(x)=-\frac{\rm i}{2}\sum\limits_{k=1}^K \int_{\Gamma_k} H_0^{(1)}(\kappa_0 |x-x'|)
\partial_{y'} u(x', 0; k){\rm d}x'.
\end{eqnarray}
The solution $u(x, y; k)$ in the $l$-th layer can be approximated by 
\begin{equation}\label{MulMul-TE-EXpan}
	u_l(x, y; k)=\sum\limits_{n=0}^N u_l^{(n)}(y; k)\cos\frac{n\pi (x-a_k)}{w_k}, 
	\quad y\in (y_{l}^k, y_{l-1}^k).
\end{equation}
Following the same discussion in Section \ref{Sub2TE}, we find the following expression within the $k$-th cavity:
 \begin{align*}
 	\partial_y u(x, 0^+; k) = \left(\frac{\kappa_0}{\kappa_{k,1}}\right)^2 \partial_y u_1(x, 0^-; k)=\sum\limits_{n=0}^N t_k^{(n)}u_{0, k}^{(n)}\cos\frac{n\pi(x-a_k)}{w_k}.
 \end{align*}
 where $t_k^{(n)}= \big(\frac{\kappa_0}{\kappa_{k,1}}\big)^2\big(-b_{1, k}^{(n)}+\frac{1}{\kappa_{k,1}^2}(a_{1, k}^{(n)})^2 \hat{u}_{1, k}\big)$. 
 
By substituting the above equation into \eqref{TBC-TE} and \eqref{MulMul-TE-NtD}, we deduce
\begin{equation}\label{MulMulTE-Sys}
 {\mathbb D}_{\rm TE}{\mathbb U}={\mathbb M}_{\rm TE}{\mathbb U}+{\mathbb G}, 
\end{equation}
where 
\[
 {\mathbb U}=\begin{bmatrix}
	U_1 \\ U_2 \\ \vdots \\ U_K
	\end{bmatrix},\quad
 {\mathbb G}=\begin{bmatrix}
	G_1 \\ G_2 \\ \vdots \\ G_K
	\end{bmatrix},\quad 
	{\mathbb D}_{\rm TE}=\begin{bmatrix}
	D^{(1)}_{\rm TE} & 0 & \cdots & 0\\
	0 & D^{(2)}_{\rm TE} & \cdots & 0\\
	 \vdots & \vdots & \ddots & \vdots\\
	0 & 0 & \cdots & D^{(K)}_{\rm TE}
	\end{bmatrix}
\]
with $U_k=[u_{0, k}^{(0)}, u_{0, k}^{(1)}, \dots, u_{0, k}^{(N)}]^\top$, $D^{(k)}_{\rm TE}=w_k$ for $k=1$ and $w_k/2$ otherwise, the elements of the $k$-th block of the vector ${\mathbb G}$ being defined as
\[
G_k(m)=2\int_0^{w_k} e^{{\rm i}\alpha (x+a_k)}\cos\frac{ m \pi x}{w_k}{\rm d}x,
\]
and the block form of the matrix ${\mathbb M}_{\rm TE}$ is 
\[
 	{\mathbb M}_{\rm TE}=\begin{bmatrix}
	M_{1, 1} & M_{1, 2} & \cdots & M_{1, K}\\
	M_{2, 1} & M_{2, 2} & \cdots & M_{2, K}\\
	 \vdots & \vdots & \ddots & \vdots\\
	M_{K, 1} & M_{K, 2} & \cdots & M_{K, K}
	\end{bmatrix}
\]
with the entries being given by 
\[
 M_{k, j}(m, n)=
-\frac{{\rm i} t^{(n)}_j}{2}
	\int_{0}^{w_k}\int_0^{w_j}H_0^{(1)}(\kappa_0 |x+a_k-x'-a_j|)
	\cos\frac{ n\pi x'}{w_j}\cos\frac{ m\pi x}{w_k}{\rm d}x'{\rm d}x. 
 \]

Once the system \eqref{MulMulTE-Sys} is solved, the Fourier coefficients on the inner interfaces of the $k$-th cavity can be obtained using the connection formula. Subsequently, the total field in the $l$-th layer of the $k$-th cavity can be expressed by \eqref{MulMul-TE-EXpan}, where the coefficients are defined according to \eqref{MultiTM-u}.
 
\section{Numerical quadratures}\label{Section:DtN}

Since the first-order Hankel function $H_1^{(1)}(t)$ is singular at $t=0$, the DtN operator \eqref{Hadamard} for the TM polarization is defined in the sense of the Hadamard finite-part. Developing a high-order numerical integral quadrature formula is challenging for hypersingular integrals. In this section, we present an alternative transparent boundary condition (TBC) containing weakly singular integrals and propose a high-order numerical quadrature rule for these specific cases. 

\subsection{An alternative transparent boundary condition}

We investigate the transparent boundary condition \eqref{TBC-TM} in TM polarization and explore the application of numerical quadrature techniques to handle the weakly singular integrals involved in the transparent boundary conditions.

\begin{lemma}
The transparent boundary condition \eqref{TBC-TM} can be reformulated as follows:
\begin{align}\label{NewDtN}
\partial_{x_2}u(x_1, 0)
&=\frac{{\rm i}\kappa_0^2}{2}\int_{\Gamma} u(y_1, 0)H_0^{(1)}(\kappa_0|x_1-y_1|){\rm d}y_1\notag\\
&\quad+\frac{\rm i}{2}\partial_{x_1}\left[\int_{\Gamma}\partial_{y_1} u(y_1, 0) H_0^{(1)}(\kappa_0 |x_1-y_1|){\rm d}y_1
\right]-2{\rm i}\beta e^{{\rm i}\alpha x_1}.
\end{align}
\end{lemma}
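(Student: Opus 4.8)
The plan is to establish the kernel-level identity
\[
I_{\rm TM}(u)(x_1)=\frac{{\rm i}\kappa_0^2}{2}\int_{\Gamma}u(y_1,0)H_0^{(1)}(\kappa_0|x_1-y_1|)\,{\rm d}y_1+\frac{\rm i}{2}\partial_{x_1}\int_{\Gamma}\partial_{y_1}u(y_1,0)H_0^{(1)}(\kappa_0|x_1-y_1|)\,{\rm d}y_1,
\]
after which \eqref{NewDtN} follows at once by inserting it into the original condition \eqref{TBC-TM}, since $\partial_{x_2}u=I_{\rm TM}(u)+f$ with $f=-2{\rm i}\beta e^{{\rm i}\alpha x_1}$. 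The two analytic tools I would use are the Hankel differentiation relation $\tfrac{d}{dz}H_0^{(1)}(z)=-H_1^{(1)}(z)$ together with the recurrence $\tfrac{d}{dz}H_1^{(1)}(z)=H_0^{(1)}(z)-z^{-1}H_1^{(1)}(z)$, and the elementary facts $\partial_{x_1}|x_1-y_1|=-\partial_{y_1}|x_1-y_1|={\rm sgn}(x_1-y_1)$.

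First I would denote the inner weakly singular integral by $J(x_1)=\int_{\Gamma}\partial_{y_1}u(y_1,0)H_0^{(1)}(\kappa_0|x_1-y_1|)\,{\rm d}y_1$ and integrate by parts in $y_1$. The boundary contributions at the two endpoints of the aperture $\Gamma=[a,b]$ vanish, because the perfect-conductor condition \eqref{pec-pec-TM} forces $u$ to be zero where the aperture meets the cavity walls, i.e. $u(a,0)=u(b,0)=0$. Using $\partial_{y_1}H_0^{(1)}(\kappa_0|x_1-y_1|)=\kappa_0\,{\rm sgn}(x_1-y_1)H_1^{(1)}(\kappa_0|x_1-y_1|)$, this yields $J(x_1)=-\kappa_0\int_{\Gamma}u(y_1,0)\,{\rm sgn}(x_1-y_1)H_1^{(1)}(\kappa_0|x_1-y_1|)\,{\rm d}y_1$, whose kernel behaves like $1/(x_1-y_1)$ near the diagonal and is therefore of Cauchy principal-value type.

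Next I would differentiate $J$ with respect to $x_1$. Writing $g(t)={\rm sgn}(t)H_1^{(1)}(\kappa_0|t|)$, the recurrence gives $g'(t)=\kappa_0 H_0^{(1)}(\kappa_0|t|)-|t|^{-1}H_1^{(1)}(\kappa_0|t|)$, an even kernel that splits into a weakly singular logarithmic part and precisely the hypersingular part appearing in \eqref{Hadamard}. Substituting and collecting constants, I expect $\partial_{x_1}J=-\kappa_0^2\int_{\Gamma}uH_0^{(1)}\,{\rm d}y_1-2{\rm i}\,I_{\rm TM}(u)$, so that $\tfrac{\rm i}{2}\partial_{x_1}J=I_{\rm TM}(u)-\tfrac{{\rm i}\kappa_0^2}{2}\int_{\Gamma}uH_0^{(1)}\,{\rm d}y_1$, which rearranges to the claimed identity.

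The main obstacle is the rigorous justification of the differentiation step: passing $\partial_{x_1}$ through the principal-value integral for $J$ produces a kernel with a non-integrable $|x_1-y_1|^{-2}$ singularity, and this must be interpreted in the Hadamard finite-part sense and shown to coincide exactly with the finite-part definition of $I_{\rm TM}$ in \eqref{Hadamard}. I would handle this by excising a symmetric $\varepsilon$-neighborhood of $y_1=x_1$, differentiating the resulting regular integral, and tracking the endpoint contributions as $\varepsilon\to0$; the odd symmetry of the principal-value kernel together with the recurrence should force the would-be divergent and free terms to cancel, leaving precisely the finite-part integral. Verifying this cancellation, and confirming that the order of integration by parts and differentiation does not alter the finite part, is the delicate point; the remaining manipulations are routine.
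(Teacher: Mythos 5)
Your proposal is formally correct in its algebra and targets exactly the right kernel identity, but it takes a genuinely different route from the paper. You work directly on the hypersingular operator $I_{\rm TM}$: integrate by parts in $y_1$ (legitimate, since the PEC conditions force $u(a,0)=u(b,0)=0$ at the aperture endpoints), obtain a Cauchy principal-value kernel ${\rm sgn}(x_1-y_1)H_1^{(1)}(\kappa_0|x_1-y_1|)$, and then differentiate in $x_1$ using the recurrence $H_1^{(1)\prime}(z)=H_0^{(1)}(z)-z^{-1}H_1^{(1)}(z)$ to split off the Hadamard finite-part kernel of \eqref{Hadamard}; the constants all check out. The paper instead never touches the finite-part integral: it starts from the Green's representation of the scattered field with the half-plane Dirichlet Green's function $G_{\rm TM}$, converts to the Neumann Green's function $G_{\rm TE}$ via $\partial_{y_2}G_{\rm TM}=-\partial_{x_2}G_{\rm TE}$, uses the Helmholtz equation to trade $\partial_{x_2}^2$ for $-\partial_{x_1}^2-\kappa_0^2$, integrates by parts once in $y_1$ while still at $x_2>0$ (where everything is smooth), and only then takes the trace $x_2\to 0^+$. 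The payoff of the paper's route is that all singular-integral subtleties are absorbed into the standard continuity of the single-layer potential across $\Gamma$; the payoff of yours is that it proves the equivalence of the two boundary conditions as an identity between the operators themselves, without re-deriving either from scratch.

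The one substantive caveat is that the crux of your route --- rigorously identifying $\partial_{x_1}$ of the principal-value integral with the Hadamard finite-part integral defining $I_{\rm TM}$, including the cancellation of the $O(\varepsilon^{-1})$ endpoint contributions from the moving excised window --- is only sketched, not carried out. This is a standard fact for Cauchy-type kernels and your $\varepsilon$-excision plan is the right way to verify it, but as written the proof is incomplete precisely at the step that distinguishes a formal manipulation from a proof; by contrast, the paper's derivation closes without ever needing this lemma.
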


\begin{proof}
Let $G_{\rm TM}(\boldsymbol{x}, \boldsymbol{y})$ represent the Dirichlet Green's function in the upper half-plane for TM polarization. Specifically, it is given by 
\[
	G_{\rm TM}(\boldsymbol{x}; \boldsymbol{y})=\frac{\rm i}{4}H_0^{(1)}(\kappa_0 |\boldsymbol{x}-\boldsymbol{y}|)
	-\frac{\rm i}{4}H_0^{(1)}(\kappa_0 |\boldsymbol{x}-\boldsymbol{y}'|),
\]
where $\boldsymbol{x}=(x_1, x_2)$, $\boldsymbol{y}=(y_1, y_2)$, and $\boldsymbol{y}'=(y_1, -y_2)$.
It follows from the Sommerfeld radiation condition \eqref{Sommerfeld} and Green's integral formula that for any $(x_1, x_2)\in \mathbb{R}_+^2$, the scattered field $u^s(x_1, x_2)$ can be expressed as
\begin{eqnarray*}
u^s(x_1, x_2)=-\int_{\Gamma_g\cup\Gamma} \partial_{y_2} u^s(y_1, 0) G_{\rm TM}(x_1, x_2; y_1, 0){\rm d} y_1\\
   		+\int_{\Gamma_g\cup\Gamma}u^s(y_1, 0) \partial_{y_2} G_{\rm TM}(x_1, x_2; y_1, 0){\rm d} y_1.
\end{eqnarray*}

Based on the homogeneous Dirichlet boundary condition \eqref{pec-pec-TM}, the total field satisfies
\begin{align}\label{InteS}
u(x_1, x_2)-u^b(x_1, x_2)
&=-\int_{\Gamma_g\cup\Gamma} \Big[\partial_{y_2} (u(y_1, 0)-u^b(y_1, 0)) G_{\rm TM}(x_1, x_2; y_1, 0)\notag\\
&\quad -(u(y_1, 0)-u^b(y_1, 0)) \partial_{y_2} G_{\rm TM}(x_1, x_2; y_1, 0)\Big]{\rm d} y_1\notag\\
&=\int_{\Gamma} u(y_1, 0) \partial_{y_2} G_{\rm TM}(x_1, x_2; y_1, 0){\rm d}y_1.
\end{align}
where $u^b=u^i-u^r=e^{{\rm i}(\alpha x_1-\beta x_2)}-e^{{\rm i}(\alpha x_1+\beta x_2)}$ is referred to the background field.

Define an auxiliary function as the Dirichlet Green's function in the upper half-plane for TE polarization:
\[
	G_{\rm TE}(\boldsymbol{x}, \boldsymbol{y})=\frac{\rm i}{4}H_0^{(1)}(\kappa_0 |\boldsymbol{x}-\boldsymbol{y}|)
	+\frac{\rm i}{4}H_0^{(1)}(\kappa_0 |\boldsymbol{x}-\boldsymbol{y}'|).
\]
Noting $\partial_{y_2}G_{\rm TM}(x_1, x_2; y_1, y_2)=-\partial_{x_2}G_{\rm TE}(x_1, x_2; y_1, y_2)$, we have from 
\eqref{InteS} that 
\begin{equation}\label{InteS2}
u(x_1, x_2)-u^b(x_1, x_2)=-\int_{\Gamma} u(y_1, 0)\partial_{x_2}G_{\rm TE}(x_1, x_2; y_1, 0){\rm d}y_1.
\end{equation}

Taking the partial derivative of \eqref{InteS2} with respect to $x_2$, we obtain 
\begin{align}\label{KeyI2}
&\partial_{x_2}u(x_1, x_2)-\partial_{x_2}u^b(x_1, x_2)  =
 -\int_{\Gamma} u(y_1, 0)\partial^2_{x_2}G_{\rm TE}(x_1, x_2; y_1, 0){\rm d}y_1\notag\\
&=\kappa_0^2\int_{\Gamma} u(y_1, 0)G_{\rm TE}(x_1, x_2; y_1, 0){\rm d}y_1\notag\\
&\quad -\frac{\rm i}{4}\int_{\Gamma} u(y_1, 0)\left[\partial_{x_1y_1}^2 H_0^{(1)}(\kappa_0 |\boldsymbol{x}-\boldsymbol{y}|)
+\partial_{x_1y_1}^2 H_0^{(1)}(\kappa_0 |\boldsymbol{x}-\boldsymbol{y}'|)\right]{\rm d}y_1\notag\\
&=\kappa_0^2\int_{\Gamma} u(y_1, 0)G_{\rm TE}(x_1, x_2; y_1, 0){\rm d}y_1\notag\\
&\quad +\frac{\rm i}{4}\int_{\Gamma} \partial_{y_1}u(y_1,0)
\left[\partial_{x_1} H_0^{(1)}(\kappa_0 |\boldsymbol{x}-\boldsymbol{y}|)
+\partial_{x_1} H_0^{(1)}(\kappa_0 |\boldsymbol{x}-\boldsymbol{y}'|)\right]{\rm d}y_1,
\end{align} 
where we have employed integration by parts and utilized the following facts:
\begin{align*}
\partial_{x_2}^2 G_{\rm TE}(\boldsymbol{x}; \boldsymbol{y})&=-\partial_{x_1}^2 G_{\rm TE}(\boldsymbol{x}; \boldsymbol{y})-\kappa_0^2 G_{\rm TE}(\boldsymbol{x}; \boldsymbol{y}),\\
\partial_{x_1}^2 G_{\rm TE}(\boldsymbol{x}; \boldsymbol{y}) &=
\frac{\rm i}{4}\left[\partial_{x_1}^2 H_0^{(1)}(\kappa_0 |\boldsymbol{x}-\boldsymbol{y}|)
+\partial_{x_1}^2 H_0^{(1)}(\kappa_0 |\boldsymbol{x}-\boldsymbol{y}'|)\right]\\
&=-\frac{\rm i}{4}\left[\partial_{x_1y_1}^2 H_0^{(1)}(\kappa_0 |\boldsymbol{x}-\boldsymbol{y}|)
+\partial_{x_1y_1}^2 H_0^{(1)}(\kappa_0 |\boldsymbol{x}-\boldsymbol{y}'|)\right].
\end{align*}
Taking $x_2\rightarrow 0^+$ in \eqref{KeyI2}, and using the continuity of the total field and
the single layer potential across the aperture $\Gamma$, we deduce 
\begin{align*}
&\partial_{x_2}u(x_1, 0)-\partial_{x_2}u^b(x_1, 0)\\
 &=\kappa_0^2\int_{\Gamma} u(y_1, 0) G_{\rm TE}(x_1, 0; y_1, 0){\rm d}y_1
+\frac{\rm i}{2}\int_{\Gamma}\partial_{y_1} u(y_1, 0)\partial_{x_1} H_0^{(1)}(\kappa_0 |x_1-y_1|)
{\rm d}y_1\\
&=\frac{{\rm i}\kappa_0^2}{2}\int_{\Gamma} u(y_1, 0)H_0^{(1)}(\kappa_0|x_1-y_1|){\rm d}y_1
+\frac{\rm i}{2}\partial_{x_1}\Big[\int_{\Gamma}\partial_{y_1} u(y_1, 0) H_0^{(1)}(\kappa_0 |x_1-y_1|)
{\rm d}y_1\Big],
\end{align*}
which completes the proof by noting $\partial_{x_2}u^b(x_1, 0)=-2{\rm i}\beta e^{{\rm i}\alpha x_1}$. 
\end{proof}

Substituting the expansion \eqref{fse} into  \eqref{NewDtN}, we get
\begin{align}\label{SingleNewDtNExp}
\partial_{x_2}u(x_1, 0)
&=\frac{{\rm i}\kappa_0^2}{2}\int_{0}^{w}\bigg( \sum\limits_{n=1}^N u^{(n)}\sin\frac{ n \pi y_1}{w}H_0^{(1)}(\kappa_0|x_1-y_1-a|)\bigg){\rm d}y_1\notag\\
&\quad+\frac{\rm i}{2}\partial_{x_1}\bigg[\int_{0}^{w}
\sum\limits_{n=1}^N u^{(n)}\frac{n\pi}{w}\cos\frac{n \pi y_1}{w} H_0^{(1)}(\kappa_0 |x_1-y_1-a|){\rm d}y_1\bigg]-2{\rm i}\beta e^{{\rm i}\alpha x_1}.
\end{align}
By multiplying both sides of \eqref{SingleNewDtNExp} by $\sin \frac{m \pi (x_1-a)}{w}$ and integrating over $\Gamma$, we obtain the equivalent entries of the matrix $M_{\rm TM}$ in \eqref{TM-SingleDtNOld}:
\begin{align}\label{NewDtNK-Single}
M_{\rm TM}(m, n)&=\frac{{\rm i}\kappa_0^2}{2}\int_0^{w}\left[\int_{0}^{w}\sin\frac{n\pi y}{w}H_0^{(1)}\left(\kappa_0|x-y|\right){\rm d}y\right]\sin\frac{m\pi  x}{w} {\rm d}x\notag\\ 
 &\quad -\frac{{\rm i}mn\pi^2}{2w^2}\int_0^{w}
\left[\int_{0}^{w} \cos\frac{n\pi  y}{w}  H_0^{(1)}(\kappa_0 |x-y|){\rm d}y
\right]\cos\frac{m\pi  x}{w} {\rm d}x.
\end{align}

For multiple multi-layered cavities, following the same discussion, we can derive the TBC on the $k$-th cavity as
\begin{align}\label{MulNewDtN}
\partial_{x_2}u(x_1, 0; k)
&=\frac{{\rm i}\kappa_0^2}{2}\sum\limits_{j=1}^K\int_{\Gamma_j} u(y_1, 0; j)
	H_0^{(1)}(\kappa_0|x_1-y_1|){\rm d}y_1\notag\\
&\quad+\frac{\rm i}{2}\sum\limits_{j=1}^K\partial_{x_1}\left[\int_{\Gamma_j}\partial_{y_1} u(y_1, 0; j) 
	H_0^{(1)}(\kappa_0 |x_1-y_1|){\rm d}y_1\right]
	-2{\rm i}\beta e^{{\rm i}\alpha x_1}.
\end{align}
Substituting \eqref{MulMulTM-u} into \eqref{MulNewDtN} and using a change of variables, we deduce
\begin{align}\label{MulNewDtNExp}
&\partial_{x_2}u(x_1, 0; k)
=\frac{{\rm i}\kappa_0^2}{2}\sum\limits_{j=1}^K\int_{0}^{w_j}\left( \sum\limits_{n=1}^N
	u^{(n)}_{0, j}\sin\frac{n\pi y_1}{w_j}H_0^{(1)}(\kappa_0|x_1-y_1-a_j|)\right){\rm d}y_1\notag\\
&+\frac{\rm i}{2}\sum\limits_{j=1}^K\partial_{x_1}\left[\int_{0}^{w_j}
\sum\limits_{n=1}^N u^{(n)}_{0, j}\frac{n\pi}{w_j}\cos\frac{n\pi  y_1}{w_j} 
	H_0^{(1)}(\kappa_0 |x_1-y_1-a_j|){\rm d}y_1\right]-2{\rm i}\beta e^{{\rm i}\alpha x_1}.
\end{align}
By multiplying both sides of \eqref{MulNewDtNExp} by $\sin \frac{\pi m (x_1-a_k)}{w_k}$ and integrating over $\Gamma_k$, we can obtain a new form of the matrix ${\mathbb M}_{\rm TM}$ in \eqref{MultiTM}, which only contains weakly singular integrals and has entries respresented as follows: 
\begin{align*}
M_{k, j}(m, n)&=\frac{{\rm i}\kappa_0^2}{2}\int_{0}^{w_k}
	\int_{0}^{w_j}\sin\frac{n\pi y}{w_j} 
	H_0^{(1)}\left(\kappa_0|x+a_k-y-a_j|\right)\sin\frac{m\pi x}{w_k} {\rm d}y{\rm d}x\notag\\
&\quad -\frac{{\rm i}mn\pi^2}{2 w_j w_k}
 	\int_{0}^{w_k}\int_{0}^{w_j}
 	\cos\frac{n\pi y}{w_j} H_0^{(1)}\left(\kappa_0 |x+a_k-y-a_j|\right)
 	\cos\frac{m\pi x}{w_k}{\rm d}y {\rm d}x,
\end{align*}
where $k, j=1, \dots, K$, and $m, n=1, \dots, N$.  

\subsection{Numerical quadratures}

The matrices ${\mathbb M}_{\rm TM}$ and ${\mathbb M}_{\rm TE}$  contain weakly singular integrals in the following forms subsequent to the use of a change of variables:
\begin{equation}\label{KeyIntegration}
\begin{aligned}
\int_0^{2\pi}\int_0^{2\pi} \sin\frac{ns}{2}
	H_0^{(1)}\left(\frac{\kappa_0 w}{2\pi}|s-t|\right)\sin\frac{mt}{2}{\rm d}s{\rm d}t,\\
	 \int_0^{2\pi}\int_0^{2\pi} \cos\frac{ns}{2} 
	H_0^{(1)}\left(\frac{\kappa_0 w}{2\pi}|s-t|\right)\cos\frac{mt}{2}{\rm d}s{\rm d}t.
\end{aligned}
\end{equation}
In this section, we develop an efficient approach to accurately evaluate \eqref{KeyIntegration}.

\begin{lemma}\label{lemma-mn}
If $m+n$ is odd, then
\begin{align}
\label{lemma-mn-1}\int_0^{2\pi}\left(\int_0^{2\pi} H_0^{(1)}\left(\frac{\kappa_0 w}{2\pi}|s-t|\right)
	\sin\frac{ns}{2}{\rm d}s\right)\sin\frac{mt}{2}{\rm d}t &=0,\\
\label{lemma-mn-2}\int_0^{2\pi}\left(\int_0^{2\pi} H_0^{(1)}\left(\frac{\kappa_0 w}{2\pi}|s-t|\right)
	\cos\frac{ns}{2}{\rm d}s\right)\cos\frac{mt}{2}{\rm d}t &=0.
\end{align}
\end{lemma}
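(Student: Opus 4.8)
The plan is to prove both identities at once by exploiting a reflection symmetry of the square $[0,2\pi]^2$ that leaves the kernel invariant but flips the parity of the trigonometric factors. Write $c=\frac{\kappa_0 w}{2\pi}$ and denote the sine integral in \eqref{lemma-mn-1} by $I$ and the cosine integral in \eqref{lemma-mn-2} by $J$. The first thing I would record is that, although the integrand carries a logarithmic singularity along the diagonal $s=t$ coming from the small-argument behavior $H_0^{(1)}(z)\sim\frac{2{\rm i}}{\pi}\ln z$, this singularity is integrable; hence $I$ and $J$ are well defined as absolutely convergent double integrals (so that Fubini and the change of variables below are legitimate).

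First I would apply the substitution $s\mapsto 2\pi-s$, $t\mapsto 2\pi-t$, which maps $[0,2\pi]^2$ onto itself with Jacobian of absolute value $1$, so the Lebesgue measure is preserved. The kernel is invariant under this map since $|(2\pi-s)-(2\pi-t)|=|s-t|$; moreover the map fixes the diagonal $\{s=t\}$, so the singular set is preserved and no boundary or principal-value terms are introduced.

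Next I would track the sign produced by the trigonometric factors. Using the addition formula, $\sin\frac{n(2\pi-s)}{2}=\sin\!\big(\pi n-\tfrac{ns}{2}\big)=(-1)^{n+1}\sin\frac{ns}{2}$, and likewise for the $t$-factor, so the sine integrand is multiplied by $(-1)^{n+1}(-1)^{m+1}=(-1)^{m+n}$. For the cosine case, $\cos\frac{n(2\pi-s)}{2}=\cos\!\big(\pi n-\tfrac{ns}{2}\big)=(-1)^{n}\cos\frac{ns}{2}$, and the cosine integrand is again multiplied by $(-1)^{m}(-1)^{n}=(-1)^{m+n}$. Therefore, in both cases the substitution gives $I=(-1)^{m+n}I$ and $J=(-1)^{m+n}J$. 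When $m+n$ is odd, $(-1)^{m+n}=-1$, so $I=-I$ and $J=-J$, forcing $I=J=0$; this establishes \eqref{lemma-mn-1} and \eqref{lemma-mn-2}.

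The argument is essentially a parity bookkeeping, so I do not anticipate any genuine obstacle. The only step deserving care is the justification that the change of variables may be performed across the weakly singular diagonal; I expect this to be immediate once the logarithmic singularity is noted to be integrable, and it is the one point I would spell out rather than leave implicit.
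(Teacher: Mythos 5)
Your proof is correct and rests on the same reflection symmetry $(s,t)\mapsto(2\pi-s,2\pi-t)$ and parity bookkeeping that the paper uses; the paper merely organizes it in two steps (first showing the inner integral $F$ satisfies $F(2\pi-t)=(-1)^{n+1}F(t)$, then splitting the outer integral at $\pi$), whereas you perform the substitution simultaneously in both variables. The conclusions and the underlying idea are identical, and your remark on absolute convergence across the logarithmic singularity adequately justifies the change of variables.
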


\begin{proof}
We only show the proof of \eqref{lemma-mn-1} since the proof is similar to \eqref{lemma-mn-2}. For any $t\in[0, 2\pi]$, define 
\[
	F(t)=\int_0^{2\pi} H_0^{(1)}\left(\frac{\kappa_0 w}{2\pi}|s-t|\right)\sin\frac{ns}{2}{\rm d}s.
\]
Let $\xi=2\pi-s$, we have from a simple calculation that 
\begin{align*}
	F(2\pi-t)&=\int_0^{2\pi} H_0^{(1)}\left(\frac{\kappa_0 w}{2\pi}|s-2\pi+t|\right)\sin\frac{ns}{2}{\rm d}s \\
		&=\int_0^{2\pi} H_0^{(1)}\left(\frac{\kappa_0 w}{2\pi}|2\pi-\xi-2\pi+t|\right)\sin\frac{n}{2}(2\pi-\xi){\rm d}\xi=(-1)^{(n+1)} F(t).
\end{align*}
A straightforward calculation shows that 
\begin{align*}
&\int_0^{2\pi}\left(\int_0^{2\pi} H_0^{(1)}
	\left(\frac{\kappa_0 w}{2\pi}|s-t|\right)\sin\frac{ns}{2}{\rm d}s\right)\sin\frac{mt}{2}{\rm d}t\\
&=\int_0^{\pi} F(t)\sin\frac{mt}{2}{\rm d}t+\int_{\pi}^{2\pi} F(t)\sin\frac{mt}{2}{\rm d}t\\
&=\int_0^{\pi} F(t)\sin\frac{mt}{2}{\rm d}t+(-1)^{(m+n)}\int_{0}^{\pi} F(t)\sin\frac{mt}{2}{\rm d}t=0, 
\end{align*}
if $m+n$ is odd, which completes the proof. 
\end{proof}

Next, we address numerical quadrature in scenarios where the sum of $m$ and $n$ results in an even number. The established technique outlined in \cite[Section 3.5]{coltonbook13} for computing weakly singular integrals associated with the zeroth-order Hankel function necessitates the integrand function to be $2\pi$-periodic. Regrettably, this condition cannot be met in our current context.

It is clear to note from \eqref{KeyIntegration} that 
\begin{align}\label{SingluarIntegrationH0}
&\int_0^{2\pi}\int_0^{2\pi} \sin\frac{ns}{2}
	H_0^{(1)}\Big(\frac{\kappa_0 w}{2\pi}|s-t|\Big)\sin\frac{mt}{2}{\rm d}s{\rm d}t\notag\\
&	=\int_0^{2\pi}\int_0^{2\pi} \sin\frac{ns}{2}
	\left[H_0^{(1)}\Big(\frac{\kappa_0 w}{2\pi}|s-t|\Big)
		-\frac{2{\rm i}}{\pi}J_0\Big(\frac{\kappa_0 w}{2\pi}|s-t|\Big)\ln |s-t|\right]\sin\frac{mt}{2}{\rm d}s{\rm d}t\notag\\
&\quad+\frac{2{\rm i}}{\pi}\int_0^{2\pi}\int_0^{2\pi}
	J_0\Big(\frac{\kappa_0 w}{2\pi}|s-t|\Big)\ln |s-t| \sin\frac{ns}{2}\sin\frac{mt}{2}{\rm d}s{\rm d}t. 
\end{align}
Recalling the power series of the Bessel function (cf. \cite{OLBC-NY-2010})
\begin{align*}
H_0^{(1)}(z)&=J_0(z)+{\rm i} Y_0(z)\\
&= J_0(z)+\frac{2{\rm i}}{\pi}\Big(\ln\frac{z}{2}+\gamma\Big)J_0(z)+\frac{2{\rm i}}{\pi}
\sum\limits_{k=1}^\infty (-1)^{k+1}\bigg(\sum\limits_{j=1}^k \frac{1}{j}\bigg)\frac{\left(\frac{1}{4}z^2\right)^k}{(k!)^2}, 
\end{align*}
we know that the first term of \eqref{SingluarIntegrationH0} is analytic. Moreover, we have 
\[
	\lim\limits_{t\rightarrow s}H_0^{(1)}\Big(\frac{\kappa_0 w}{2\pi}|s-t|\Big)
		-\frac{2{\rm i}}{\pi}J_0\Big(\frac{\kappa_0 w}{2\pi}|s-t|\Big)\ln |s-t|
		=1+\frac{2{\rm i}}{\pi}\gamma+\frac{2{\rm i}}{\pi}\ln \frac{\kappa_0 w}{4\pi}.
\]
The integrand function of the second term is weakly singular, and directly applying a high order quadrature rule is not feasible. Our idea is to repeatedly apply integration by parts to increase the regularity of the integrand function.

Specifically, it follows from the expansion of Bessel function (cf. \cite{OLBC-NY-2010}) that 
\begin{align}\label{DecomposeJ0}
	J_0\Big(\frac{\kappa_0 w}{2\pi}|s-t|\Big)=\sum\limits_{k=0}^\infty(-1)^k\frac{1}{(k!)^2}\Big[\frac{\kappa_0 w}{4\pi}(s-t)\Big]^{2k}
	=\sum\limits_{k=0}^K (-1)^k\frac{1}{(k!)^2}\Big[\frac{\kappa_0 w}{4\pi}(s-t)\Big]^{2k}\notag\\
	+\bigg[J_0\Big(\frac{\kappa_0 w}{2\pi}|s-t|\Big)
	-\sum\limits_{k=0}^K (-1)^k\frac{1}{(k!)^2}\Big(\frac{\kappa_0 w}{4\pi}(s-t)\Big)^{2k}\bigg].
\end{align}
Since the second term has higher regularity of $C^{2K+2}$, substituting \eqref{DecomposeJ0} into \eqref{SingluarIntegrationH0}, we only need to handle the integral
\[
S_k(n, m)=\int_0^{2\pi}\int_0^{2\pi}(t-s)^{k-1}\ln|t-s| \sin\frac{mt}{2}\sin\frac{ns}{2}{\rm d}s{\rm d}t. 
\]

Through straightforward calculations and integration by parts, we deduce
\begin{align}\label{RecursiveSin}
S_k(n, m)=-\frac{1}{k(k+1)}\left(\frac{m}{2}\right)^2 S_{k+2}(n, m)+\frac{1}{k(k+1)^2}\left(\frac{m}{2}\right)^2 T^{(1)}_{k}(n, m)\notag\\
+\frac{m}{2k^2}T_k^{(2)}(n, m) -\frac{m}{2k(k+1)^2}\left[(-1)^{m+n}-(-1)^k\right]T_k^{(3)}(n)\notag\\
+\frac{m}{2k(k+1)}\left[(-1)^{m+n}-(-1)^k\right]W_k(n), 
\end{align}
where
\begin{align*}
&T^{(1)}_{k}(n, m)=\int_0^{2\pi}\int_0^{2\pi}\sin\frac{ns}{2}(t-s)^{k+1}\sin\frac{mt}{2}{\rm d}s{\rm d}t,\\
&T_k^{(2)}(n, m)=\int_0^{2\pi}\int_0^{2\pi}\sin\frac{ns}{2}(t-s)^{k}\cos\frac{mt}{2}{\rm d}s{\rm d}t
,\\
&T_k^{(3)}(n)=\int_0^{2\pi}\sin\frac{ns}{2}s^{k+1}{\rm d}s,\quad
W_k(n)=\int_0^{2\pi}\sin\frac{ns}{2}s^{k+1}\ln s\,{\rm d}s.
\end{align*}
The regularity of $W_k$ can be increased by applying the same idea. Explicitly, we have 
\begin{eqnarray*}
	W_k(n)=A_k(n)-\left(\frac{n}{2}\right)^2\frac{1}{(k+2)(k+3)}W_{k+2}(n),
\end{eqnarray*}
where
\begin{align*}
A_k(n)=\frac{n}{2}\frac{1}{k+2}\left(\frac{1}{k+2}+\frac{1}{k+3}\right)\int_{0}^{2\pi} s^{k+2}\cos\frac{ns}{2}\\
-\frac{(-1)^n n}{2(k+2)(k+3)}\left(2\pi\right)^{k+3}\ln(2\pi).
\end{align*}

The corresponding parts of the integrals involving cosine functions can be evaluated similarly as follows:
\begin{align}\label{RecursiveCos}
	P_k(n, m)&=\int_0^{2\pi}\int_0^{2\pi}(t-s)^{k-1}\ln|t-s| \cos\frac{mt}{2}\cos\frac{ns}{2}{\rm d}s{\rm d}t\notag\\
	&=-\left(\frac{m}{2}\right)^2\frac{1}{k(k+1)}P_{k+2}(n, m)+
	\left(\frac{m}{2}\right)^2\frac{1}{k(k+1)^2}U_k^{(1)}(n, m)
	-\frac{m}{2k^2}U_k^{(2)}(n, m)\notag\\
	&\quad -\left[(-1)^{(m+n)}-(-1)^k\right]\frac{1}{k^2}U_k^{(3)}(n)
	 	+\left[(-1)^{(m+n)}-(-1)^k\right]\frac{1}{k}X_k(n),
\end{align}
where
\begin{align*}
&U_k^{(1)}(n, m)=\int_0^{2\pi}\int_0^{2\pi}\cos\frac{ns}{2}(t-s)^{k+1}\cos\frac{mt}{2}{\rm d}s{\rm d}t,\\
&U_k^{(2)}(n, m)=\int_0^{2\pi}\int_0^{2\pi}\cos\frac{ns}{2}(t-s)^{k}\sin\frac{mt}{2}{\rm d}s{\rm d}t,\\
&U_k^{(3)}(n)=\int_0^{2\pi}\cos\frac{ns}{2}s^{k}{\rm d}s,\quad
X_k(n)=\int_0^{2\pi}\cos\frac{ns}{2}s^{k}\ln s{\rm d}s.
\end{align*}
Moreover, the function $X_k$ satisfies the recurrence relation
\[
X_k(n)=Y_k(n)-\left(\frac{n}{2}\right)^2\frac{1}{(k+1)(k+2)}X_{k+2}(n),
\]
where
\begin{align*}
Y_k(n)=-\frac{n}{2}\frac{1}{k+1}\left(\frac{1}{k+1}+\frac{1}{k+2}\right)\int_0^{2\pi}s^{k+1}\sin\frac{ns}{2}{\rm d}s\\
	+\frac{(-1)^n (2\pi)^{k+1}}{\left(k+1\right)^2}\left[(k+1)\ln(2\pi)-1\right].
\end{align*}

By using the recurrence relations \eqref{RecursiveSin} and \eqref{RecursiveCos}, we can iteratively enhance the regularity of the integrand functions in \eqref{SingluarIntegrationH0} until they meet the requirements of integral quadrature formulas, such as the Gaussian quadrature formula.

\section{Numerical Experiments}\label{Section:NE}

In this section, we present a series of numerical examples to provide compelling evidence regarding the efficacy of the method in precisely analyzing electromagnetic scattering phenomena linked to  rectangular cavities.

 \subsection{Order of accuracy}\label{Example1}
 
 First, we assess the accuracy of the proposed method, demonstrating its precision and reliability in handling the cavity scattering problems in both TM and TE polarizations. In this experiment, we aim to evaluate the convergent order of the proposed method in the $L^2$-norm. Due to the unavailability of an analytic solution, we conduct a comparison of the results obtained on a finer mesh.
 
 The cavity is situated in the interval $[-0.5, 0.5]$ with a depth of $h=1.5$. It is subjected to illumination by a plane wave with a wavenumber of $\kappa_0=1.5$ and an incidence angle of $\theta=\pi/9$. For the numerical evaluation of \eqref{KeyIntegration}, we opt for the composite 4-point Gaussian quadrature formula as a representative numerical integration technique. We set the truncation number to $N=30$. It is worth mentioning that by employing higher-order numerical quadrature formulas, even higher orders of convergence can be achieved. Figure \ref{ex1:order} displays the results for both TM and TE polarizations. The $x$-axis represents the mesh size, while the $y$-axis corresponds to the error measured in the $L^2$-norm. The red dashed line depicts the convergent order of the proposed method. For comparison purposes, we also include the solid blue curve, which represents the theoretical eighth-order convergence. The results clearly demonstrate that the proposed method exhibits an accuracy with an order of $O(h^8)$, thereby affirming its effectiveness and reliability in numerical simulations.

\begin{figure}[h]
\centering
\includegraphics[width=0.45\textwidth]{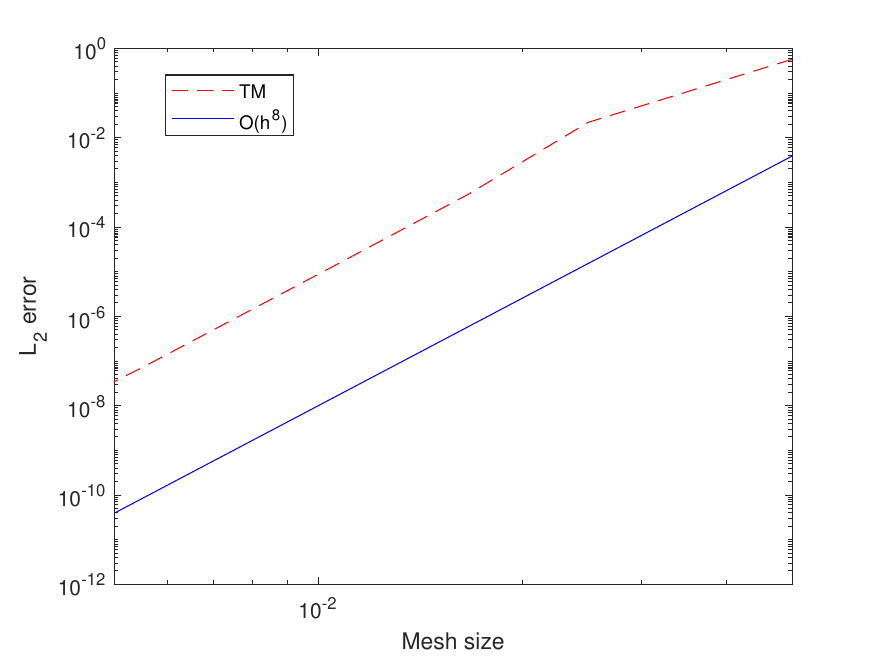}
\includegraphics[width=0.45\textwidth]{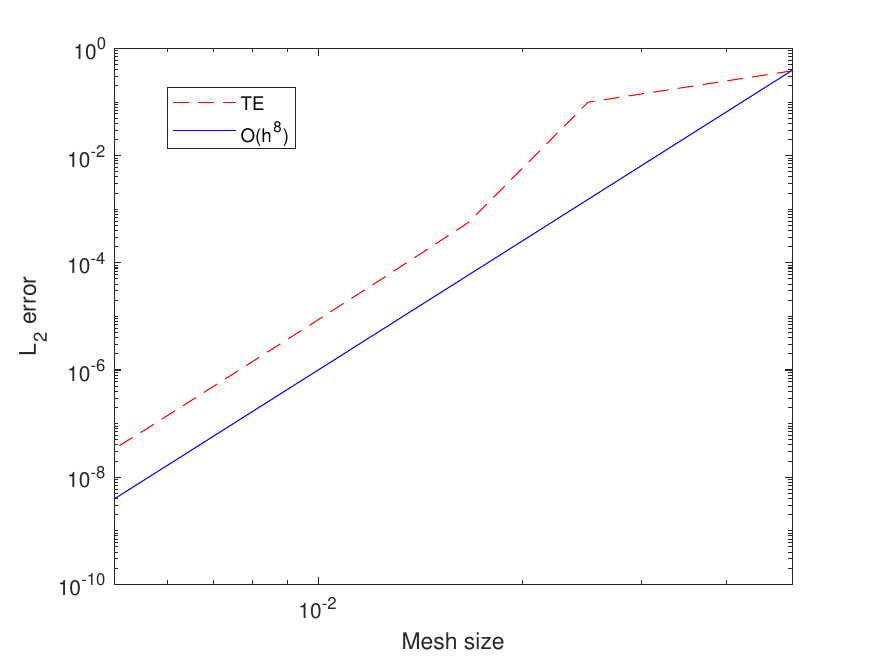}
\caption{Example \ref{Example1}: (Left) Order of convergence in TM polarization; (Right) Ordr of convergence in
TE polarization.}
\label{ex1:order}
\end{figure}

\subsection{Radar cross-section}\label{Example2}

Next, we explore the application of our proposed method in radar cross-section (RCS) computations, which highlights its capability to accurately predict the scattering behavior of practical radar systems.


In two dimensions, the RCS is defined by (cf. \cite{Jin-2002})
\[
\sigma(\varphi):=\lim\limits_{r=\rightarrow\infty}2\pi r\frac{|u^s(x, y;
\varphi)|^2}{|u^i(x, y; \theta)|^2},
\]
where $u^i$ is the incident field, $u^s$ denotes the scattered field, and $\theta$ and $\varphi$ represent the incident and observation angles, respectively. When $\theta$ and $\varphi$ are equal, $\sigma$ is referred to as the backscatter RCS, and it is defined as
\[
 \text{Backscatter RCS}(\sigma)(\varphi)=10\log_{10}\sigma(\varphi) {\rm dB}. 
\]
In TM polarization, with measurements taken on the aperture of the cavity, the RCS can be expressed as follows (cf. \cite{YBL-CSIAM-2020}):
 \begin{equation*}\label{RCS_TM}
\sigma(\varphi) =\kappa_0 \Big| \sin\varphi\int_{\Gamma} u^s(x, 0)
e^{{\rm i}\kappa_0\cos\varphi x}{\rm d}x\Big|^2.
\end{equation*}

In this experiment, we replicate a benchmark example documented in \cite{Jin-2002}. 
Specifically, we consider the backscatter RCS of a single rectangular cavity in TM polarization. The cavity has a width $w=\lambda$ and a depth $h=\lambda/4$, where $\lambda=2\pi/\kappa_0$ represents the wavelength in free space. For our analysis, we set $\kappa_0=32\pi$ and $N=150$, along with an incident angle of $\theta=\pi/3$. The obtained numerical backscatter RCS results are shown in Figure \ref{ex2:RCS}. To provide a basis for comparison, we also present the results obtained using the adaptive finite element TBC method \cite{YBL-CSIAM-2020}. The adaptive TBC method is terminated when the total number of nodal points reaches 15000. In the left part of Figure \ref{ex2:RCS}, we plot the results for the empty cavity. The solid red line represents the backscatter RCS computed by our proposed method, while the blue circle points represent the results obtained from the adaptive TBC method. In the right part of Figure \ref{ex2:RCS}, we consider the cavity filled with a lossy medium characterized by an electric permittivity of $\epsilon=4+{\text{i}}$ and a magnetic permeability of $\mu=1$. Similar to the empty cavity case, we depict the results of the proposed method using a solid red line and the adaptive TBC method using blue circle points. The results clearly demonstrate the consistency between the proposed method and the adaptive TBC method. However, it is worth noting that the proposed method only requires solving a small scale system with 151 unknowns, making it significantly more efficient in accurately computing the backscatter RCS of the cavity compared to the adaptive TBC method. This highlights the efficiency and accuracy of the proposed method in tackling the backscatter RCS computation for the considered cavity configuration.
 
\begin{figure}[h]
\centering
\includegraphics[width=0.45\textwidth]{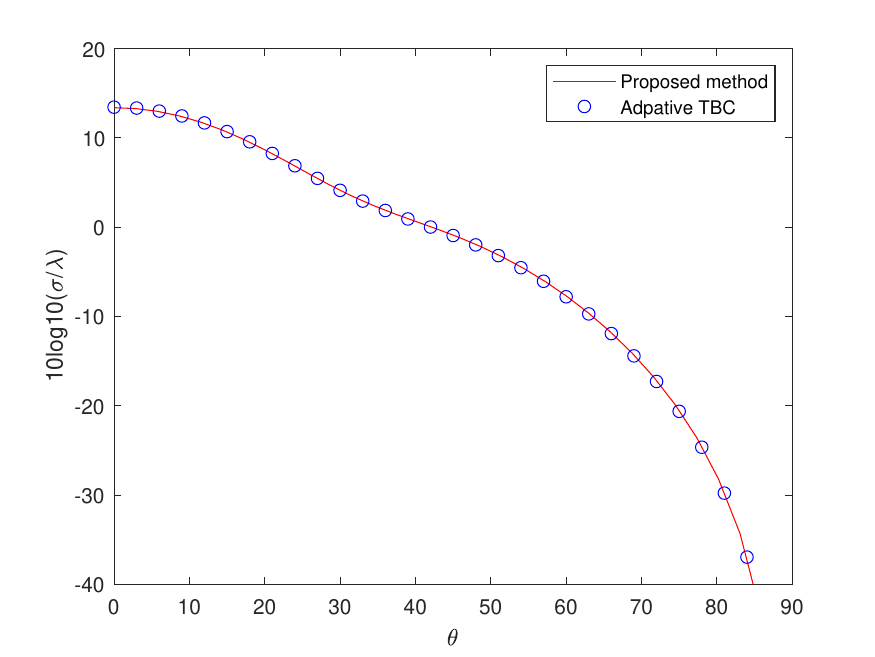}
\includegraphics[width=0.45\textwidth]{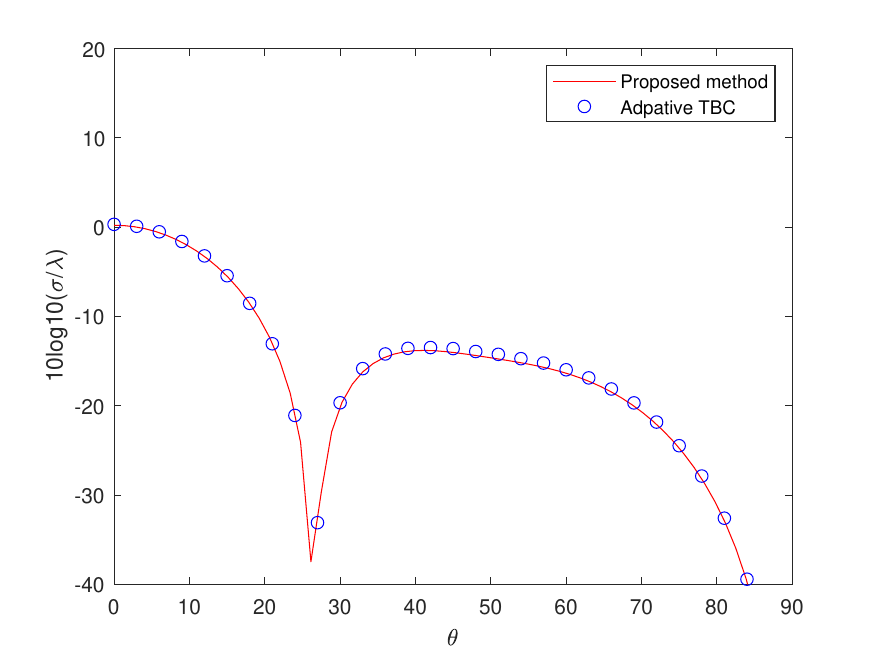}
\caption{Example \ref{Example2}: (Left) The backscatter RCS 
of an empty cavity; (Right) The backscatter RCS of a lossy cavity.}
\label{ex2:RCS}
\end{figure}

\subsection{Subwavelength enhancement}\label{Example3}

In this experiment, we explore the efficacy of our proposed method in analyzing field enhancement phenomena in subwavelength structures. By focusing on TE polarization, we can demonstrate potential applications of the method in nanophotonics and metamaterial research. 

First, we consider the case of a single cavity with the following dimensions: cavity width $w=50\,\mu m$ and cavity depth $h=1\, cm$. The cavity is illuminated from above at an incident angle of $\theta=\pi/6$. To study the behavior of the electric field enhancement, we vary the wavelength in a range from $0.2\, cm$ to $62\, cm$. Figure \ref{ex3:enhancement} presents the plot of the electric field enhancement factor $Q_E$ against the wavenumber $\kappa$, where $Q_E$ is defined as the ratio of the $L^2$-norm of the electric field $u$ to the $L^2$-norm of the incident electric field $u^i$, both integrated over the cavity domain $D$, i.e.,  
\[
Q_E=\frac{\|u\|_{L^2(D)}}{\|u^i\|_{L^2(D)}}. 
\]
As depicted in the left part of Figure \ref{ex3:enhancement}, it can be observed that the enhancement factors exhibit peaks at resonant frequencies located near $\kappa=\pi/2+n\pi$, where $n$ is an integer. These numerical findings are consistent with the theoretical results reported in \cite{GLY-Springer}, highlighting the capability of our proposed method in capturing the electric field enhancement phenomena in the subwavelength cavity structure. 

In the case of two cavities, we position them in the ground with a width of $w=0.2\,\mu m$, a depth of $h=1.5\,\mu m$, and a distance of $d=0.5\,\mu m$ between them. The two cavities are illuminated from above by a plane wave with an incident angle of $\theta=-\pi/9$. For our analysis, we vary the wavelength in the range from $500\, cm$ to $2500\, cm$. As shown in the right part of Figure \ref{ex3:enhancement}, the blue dashed curve represents the enhancement factor for a single cavity with the same width and depth, exhibiting a resonance frequency around $\kappa_0=9000$. On the other hand, the red dashed line and black solid line in the plot represent the enhancement factors for the left and right cavity, respectively. It is evident from the results that the type of enhancement for the two cavities is different. The left cavity exhibits a single peak near $\kappa_0$, while the right cavity shows two resonance frequencies around $\kappa_0$. The enhancement first peaks to the left of $\kappa_0$ and then decreases dramatically. After that, the enhancement achieves a second peak to the right of $\kappa_0$. These results demonstrate that the enhancement caused by the two cavities exhibits both an antisymmetric mode and a symmetrical mode. The numerical findings are in line with the theoretical results reported in \cite{BBT-MMS-2010} and the experimental observations conducted in \cite{PQBR-PRL-2006}. 

\begin{figure}[h]
\centering
\includegraphics[width=0.45\textwidth]{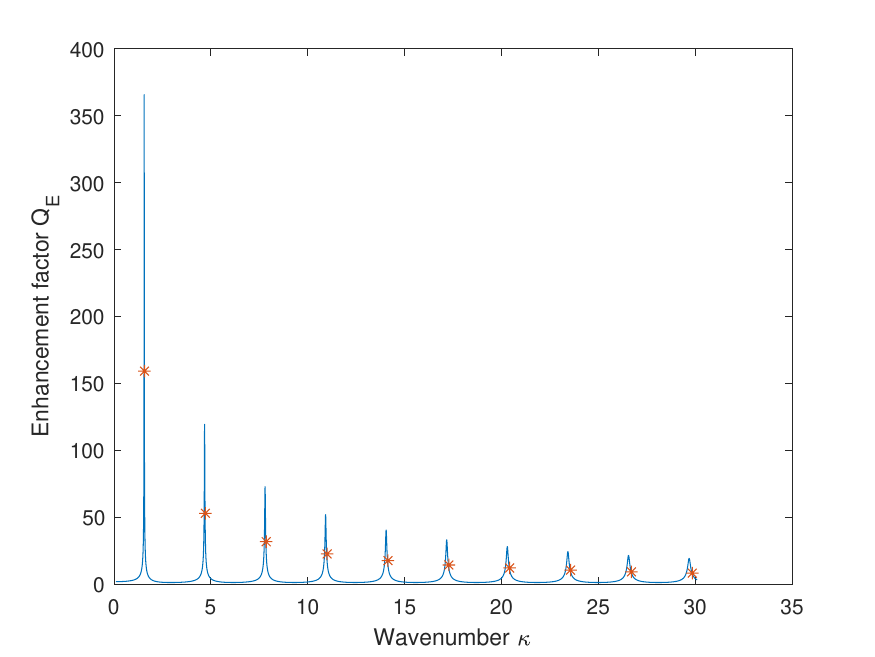}
\includegraphics[width=0.45\textwidth]{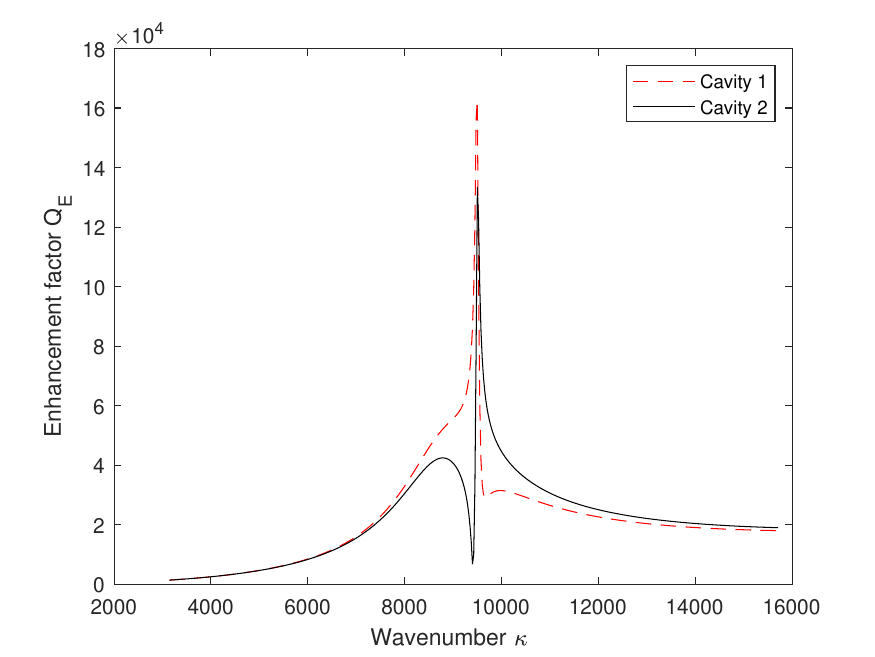}
\caption{Example \ref{Example3}: (Left) The electric field enhancement factor by a single cavity; (Right)
The electric field enhancement factor by two cavities.}
\label{ex3:enhancement}
\end{figure}
 
\subsection{Multiple multi-layered cavities}\label{Example4}

Finally, we present a challenging scenario involving the scattering by multiple cavities, which demonstrates the suitability of our proposed method in effectively handling complex configurations of multiple multi-layered cavities.

In this experiment, we consider both TM and TE polarizataions for a configuration comprising three cavities, situated at the following locations: $[-0.6, -0.1]\times [0, -0.1]\cup [0, 0.2]\times [0, -0.5] \cup [0.3, 0.6]\times[0, -0.3]$. The first cavity is assumed to be empty, while the second cavity is filled with a three-layered medium. This medium is characterized by wavenumbers $\kappa=\pi, 2\pi$, and $10\pi$, and it is separated at the inner interfaces $\left\{y=-\frac{1}{6}\right\}$ and $\left\{y=-\frac{1}{3}\right\}$. The third cavity is filled with a two-layered medium, characterized by wavenumbers $\kappa=1+0.5{\text{i}}$ and $\kappa=0.5$, and it is separated at the middle of the cavity. In order to evaluate the accuracy of our proposed method, we once again conduct a comparison with the results attained through the adaptive finite element TBC method \cite{YBL-CSIAM-2020}. The comparison is presented in Figures \ref{ex4:MultiTM} and \ref{ex4:MultiTE}. In both figures, the red solid lines and blue circle points depict the magnitudes of the total electric field on the diagonal of the left, middle, and right cavities, respectively, obtained using our proposed method and the adaptive TBC method. The comparison evidently demonstrates that the proposed method produces accurate outcomes, as they closely align with the results obtained from the adaptive TBC method. Nevertheless, it is important to note that the proposed method necessitates solving a substantially smaller system.
 
 \begin{figure}[h]
\centering
\includegraphics[width=0.32\textwidth]{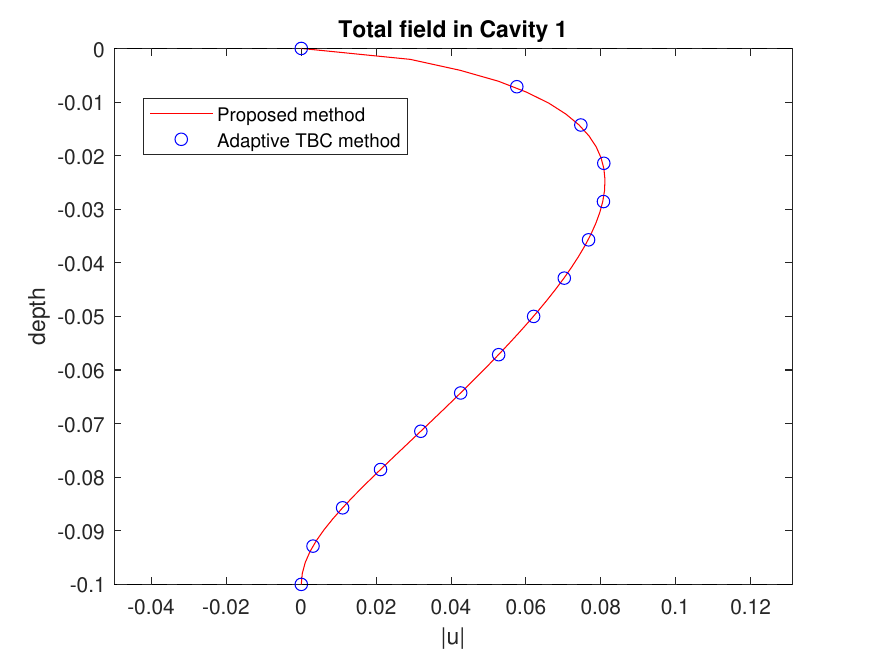}
\includegraphics[width=0.32\textwidth]{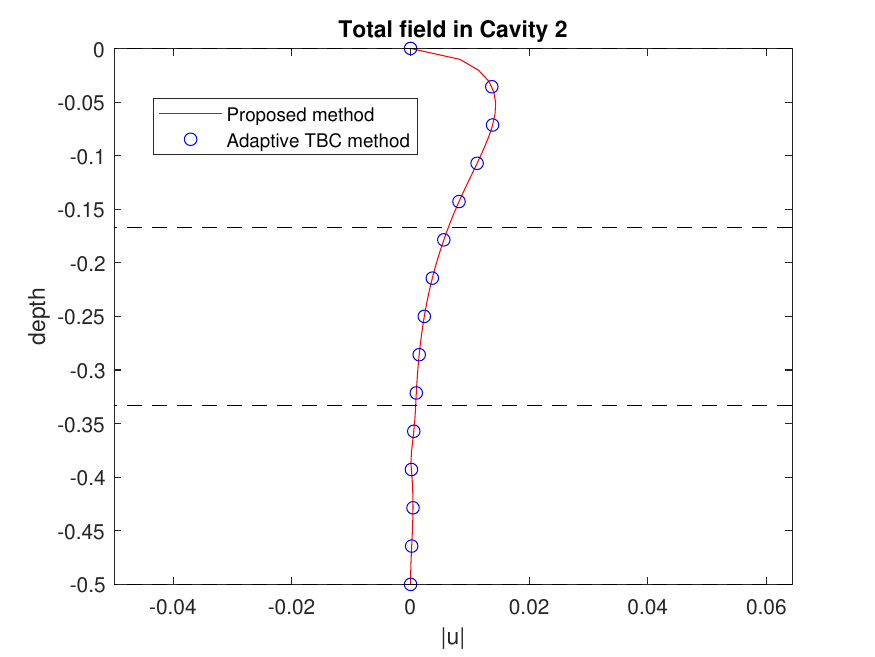}
\includegraphics[width=0.32\textwidth]{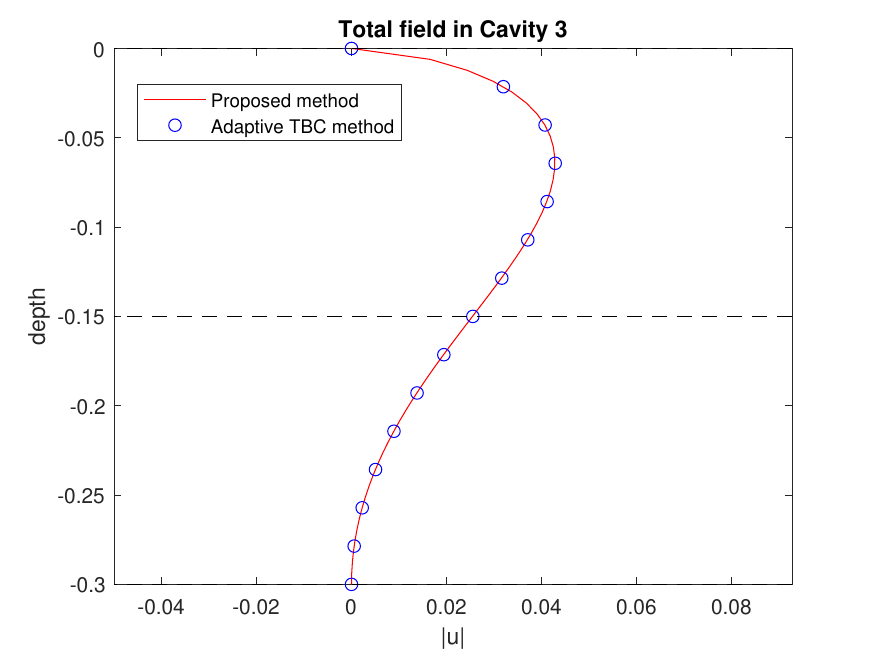}
\caption{Example \ref{Example4}: The total field on the diagonal of left, middle, and right cavity
in TM polarization.}
\label{ex4:MultiTM}
\end{figure}

\begin{figure}[h]
\centering
\includegraphics[width=0.32\textwidth]{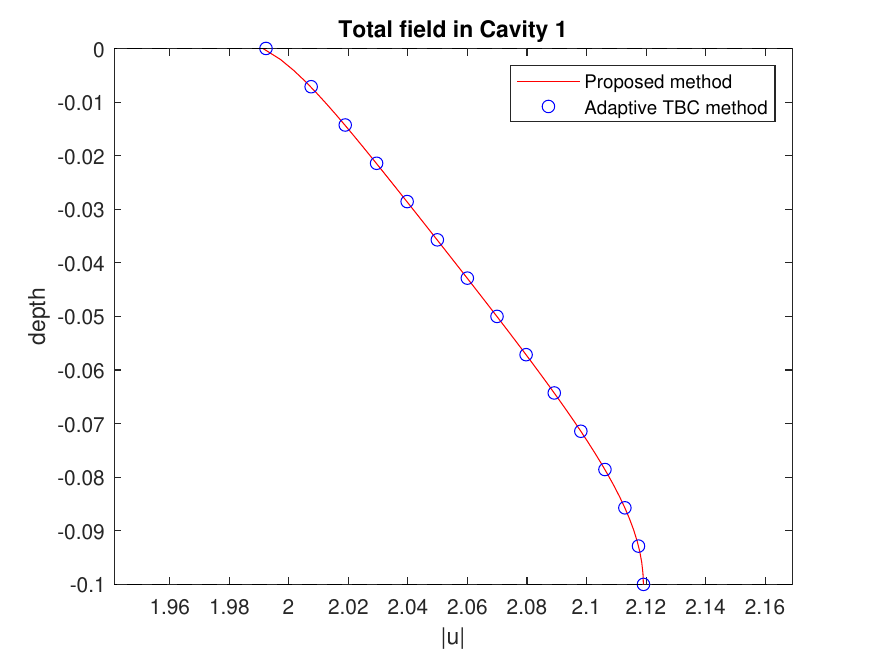}
\includegraphics[width=0.32\textwidth]{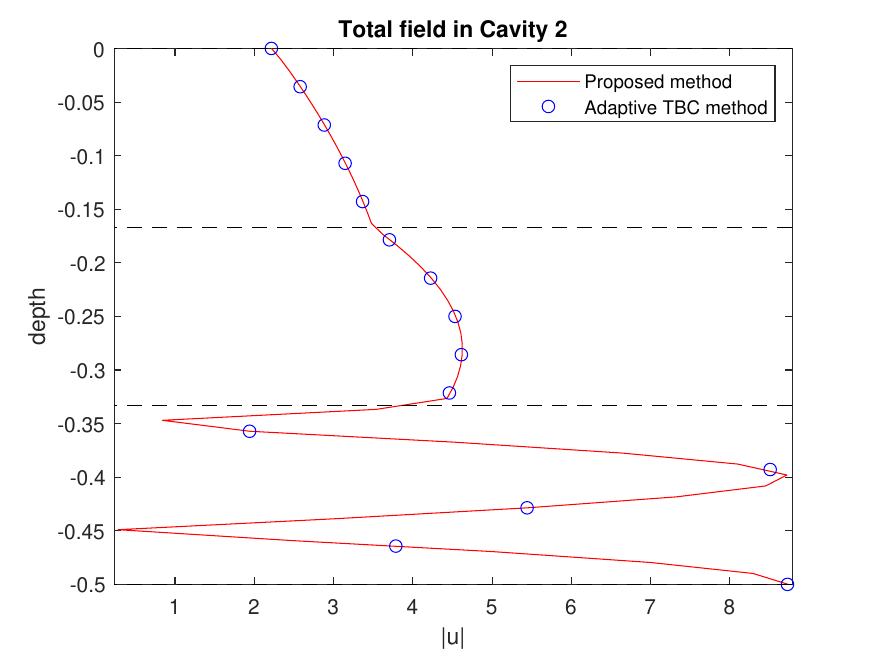}
\includegraphics[width=0.32\textwidth]{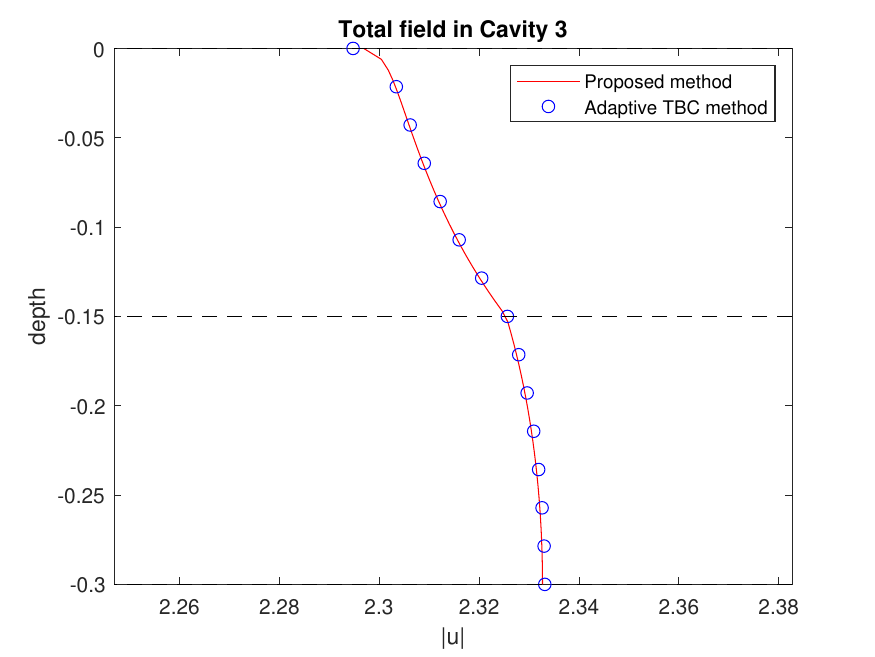}
\caption{Example \ref{Example4}: The total field on the diagonal of left, middle, and right cavity
in TE polarization.}
\label{ex4:MultiTE}
\end{figure}

\section{Conclusion}\label{Section:C}

This paper introduces a highly efficient and accurate numerical method for addressing electromagnetic scattering problems involving rectangular cavities. By using the Fourier series expansion, the original boundary value problem is transformed into one-dimensional ordinary differential equations for the Fourier coefficients. A connection formula is established to link the Fourier coefficients in each layer to those on the aperture of the cavity. This approach enables us to solve the system solely on the aperture and store the connection formula of the Fourier coefficients, significantly reducing computational resources.

Furthermore, we propose an alternative TBC on the aperture, which involves only weakly singular integrals. To handle the singularity of the Hankel function, we utilize the power series of Bessel functions to deduce a recursive formula, enhancing the smoothness of the integrand function. Consequently, high-order Gaussian quadratures can be employed, enhancing the efficiency of the method.

The numerical results demonstrate the effectiveness of our approach in accurately solving scattering problems with rectangular cavities. This method proves to be a valuable tool with diverse practical applications in radar systems, wireless communications, metamaterials, photonic devices, and more. Future research could explore extending this method to the three-dimensional Maxwell's equations and other geometries and optimizing its efficiency for even more complex scattering scenarios.

\end{document}